 \newtheorem{remark}{Remark}
 \newtheorem{lemma}[remark]{Lemma}
 \newtheorem{theorem}[remark]{Theorem}
 \newtheorem{corollary}[remark]{Corollary}
\newcommand{\Sup}{\operatorname{Sup}}
\title{On Generalized Sierpi\'{n}ski Graphs}
\author{Juan A. Rodr\'{\i}guez-Vel\'{a}zquez$^{(1)}$, Erick D. Rodr\'{i}guez-Bazan$^{(2)}$,\\ Alejandro Estrada-Moreno$^{(1)}$
\\
$^{(1)}${\small Departament d'Enginyeria Inform\`atica i Matem\`atiques,}\\
{\small Universitat Rovira i Virgili,}  {\small Av. Pa\"{\i}sos
Catalans 26, 43007 Tarragona, Spain.} \\{\small
juanalberto.rodriguez\@@urv.cat, alejandro.estrada\@@urv.cat}
\\
$^{(2)}${\small Department of Matemathics,}\\
{\small Central University of Las Villas,}{ \small   Carretera a Camajuan\'{i} km. $5\frac{1}{2}$. Villa Clara, Cuba.} \\{\small
erickrodriguezbazan\@@gmail.com}
}
\begin{document}
\maketitle

\begin{abstract}
In this paper we obtain closed formulae for several parameters of generalized Sierpi\'{n}ski graphs $S(G,t)$ in terms of parameters of the base graph $G$.
In particular, we focus on the chromatic,  vertex cover,  clique and  domination numbers.
\end{abstract}

{\it Keywords:}  Sierpi\'{n}ski graphs; vertex cover number; independence number; chromatic number; domination number.

{\it AMS Subject Classification Numbers:}   05C76; 05C69; 05C15 

\section{Introduction}
Let $G=(V,E)$ be a non-empty graph of order $n$. We denote by $V^t $ the set of words of size $t$ on alphabet $V $. The letters of a word $u$ of length $t$ are denoted by $u_1u_2...u_t$. The concatenation of two words $u$ and $v$  is denoted by $uv$. Kla\v{v}ar and Milutinovi\'c introduced in \cite{Klavzar1997} the graph  $S(K_n, t)$ whose vertex set is $V^t$, where
$\{u,v\}$ is an edge if and only if there exists $i\in \{1,...,t\}$ such that:\\
$$ \mbox{ (i) }  u_j=v_j, \mbox{ if } j<i; \mbox{ (ii) } u_i\ne v_i; \mbox{ (iii) } u_j=v_i \mbox{ and } v_j=u_i  \mbox{ if } j>i.$$

When
$n = 3$, those graphs are exactly Tower of Hanoi graphs. Later, those graphs have been
called Sierpi\'{n}ski graphs in \cite{Klavzar2002} and they were studied by now from numerous points of view. The reader is
invited to read, for instance, the following recent papers \cite{Gravier...Parreau,Hinz-Parisse,Hirtz-Holz,Klavzar2002,Klavsar-Peterin,Klavsar-Zeljic} and references
therein.
This construction was generalized in \cite{GeneralizedSierpinski} for any graph $G=(V,E)$, by defining the $t$-th \emph{generalized Sierpi\'{n}ski graph} of $G$, denoted by  $S(G,t)$,  as the graph with vertex set $V^t$ and edge set defined as follows. $\{u,v\}$ is an edge if and only if there exists $i\in \{1,...,t\}$ such that:
$$ \mbox{ (i) }  u_j=v_j, \mbox{ if } j<i; \mbox{ (ii) } u_i\ne v_i \mbox{ and } \{u_i,v_i\}\in E;  \mbox{ (iii) } u_j=v_i \mbox{ and } v_j=u_i  \mbox{ if } j>i.$$



\begin{figure}[h]
\centering
\begin{tikzpicture}[transform shape, inner sep = .5mm]
\def\side{.5};
\pgfmathsetmacro\radius{\side/sqrt(3)};
\foreach \ind in {3,4,5}
{
\pgfmathparse{150-120*(\ind-3)};
\node [draw=black, shape=circle, fill=black] (\ind) at (\pgfmathresult:\radius cm) {};
}
\node [draw=black, shape=circle, fill=black] (1) at ([yshift=\side cm]3) {};
\node [draw=black, shape=circle, fill=black] (2) at ([yshift=\side cm]4) {};
\node [draw=black, shape=circle, fill=black] (6) at ([xshift=-\side cm]5) {};
\node [draw=black, shape=circle, fill=black] (7) at ([shift=({135:\side cm})]6) {};
\foreach \ind in {2,4,5}
{
\node [scale=.8] at ([xshift=.3 cm]\ind) {$\ind$};
}
\foreach \ind in {1,3,6,7}
{
\node [scale=.8] at ([xshift=-.3 cm]\ind) {$\ind$};
}
\foreach \u/\v in {1/3,2/4,3/4,3/5,4/5,5/6,6/7}
{
\draw (\u) -- (\v);
}

\def\widenodetwo{.4};
\pgfmathparse{15*\side};
\node (center) at (\pgfmathresult cm,0) {};
\pgfmathsetmacro\sideTwo{5*\side};
\pgfmathsetmacro\radiuscenter{\sideTwo/sqrt(3)};
\foreach \d in {3,4,5}
{
\pgfmathparse{150-120*(\d-3)};
\node (center\d) at ([shift=({\pgfmathresult:\radiuscenter cm})]center) {};
\foreach \ind in {3,4,5}
{
\pgfmathparse{150-120*(\ind-3)};
\node [draw=black, shape=circle, fill=black,inner sep = \widenodetwo mm] (\d\ind) at ([shift=({\pgfmathresult:\radius cm})]center\d) {};
}
\node [draw=black, shape=circle, fill=black,inner sep = \widenodetwo mm] (\d1) at ([yshift=\side cm]\d3) {};
\node [draw=black, shape=circle, fill=black,inner sep = \widenodetwo mm] (\d2) at ([yshift=\side cm]\d4) {};
\node [draw=black, shape=circle, fill=black,inner sep = \widenodetwo mm] (\d6) at ([xshift=-\side cm]\d5) {};
\node [draw=black, shape=circle, fill=black,inner sep = \widenodetwo mm] (\d7) at ([shift=({135:\side cm})]\d6) {};
\foreach \ind in {2,4,5}
{
\node [scale=.6] at ([xshift=.3 cm]\d\ind) {$\d\ind$};
}
\foreach \ind in {1,3,6,7}
{
\node [scale=.6] at ([xshift=-.3 cm]\d\ind) {$\d\ind$};
}
\foreach \u/\v in {1/3,2/4,3/4,3/5,4/5,5/6,6/7}
{
\draw (\d\u) -- (\d\v);
}
}
\foreach \d in {1,2,6,7}
{
\ifthenelse{\d=1}
{
\node (center1) at ([shift=({0,\sideTwo})]center3) {};
}
{
\ifthenelse{\d=2}
{
\node (center2) at ([shift=({0,\sideTwo})]center4) {};
}
{
\ifthenelse{\d=6}
{
\node (center6) at ([shift=({-\sideTwo,0})]center5) {};
}
{
\node (center7) at ([shift=({135:\sideTwo cm})]center6) {};
};
};
};
\foreach \ind in {3,4,5}
{
\pgfmathparse{150-120*(\ind-3)};
\node [draw=black, shape=circle, fill=black,inner sep = \widenodetwo mm] (\d\ind) at ([shift=({\pgfmathresult:\radius})]center\d) {};
}

\node [draw=black, shape=circle, fill=black,inner sep = \widenodetwo mm] (\d1) at ([yshift=\side cm]\d3) {};
\node [draw=black, shape=circle, fill=black,inner sep = \widenodetwo mm] (\d2) at ([yshift=\side cm]\d4) {};
\node [draw=black, shape=circle, fill=black,inner sep = \widenodetwo mm] (\d6) at ([xshift=-\side cm]\d5) {};
\node [draw=black, shape=circle, fill=black,inner sep = \widenodetwo mm] (\d7) at ([shift=({135:\side cm})]\d6) {};
\foreach \ind in {2,4,5}
{
\node [scale=.6] at ([xshift=.3 cm]\d\ind) {$\d\ind$};
}
\foreach \ind in {1,3,6,7}
{
\node [scale=.6] at ([xshift=-.3 cm]\d\ind) {$\d\ind$};
}
\foreach \u/\v in {1/3,2/4,3/4,3/5,4/5,5/6,6/7}
{
\draw (\d\u) -- (\d\v);
}
}
\foreach \u/\v in {1/3,2/4,3/4,3/5,4/5,5/6,6/7}
{
\ifthenelse{\u=1}
{
\draw (\u\v) -- (\v\u);
}
{
\ifthenelse{\u=3 \AND \v=5 \OR \u=6}
{
\draw (\u\v) to[bend right] (\v\u);
}
{
\draw (\u\v) to[bend left] (\v\u);
};
};
}
\end{tikzpicture}
\caption{A graph $G$ and the generalized Sierpi\'{n}ski graph $S(G,2)$ }
\label{FigSierpinski1y2}
\end{figure}
Figure \ref{FigSierpinski1y2} shows a graph $G$ and the generalized Sierpi\'{n}ski graph $S(G,2)$, while Figure \ref{FigS(G,3)} shows the Sierpi\'{n}ski graph $S(G,3)$.

Notice that if $\{u,v\}$ is an edge of $S(G,t)$, there is an edge $\{x,y\}$ of $G$ and a word $w$ such that $u=wxyy...y$ and $v=wyxx...x$. In general, $S(G,t)$ can be constructed recursively from $G$ with the following process: $S(G,1)=G$ and, for $t\ge 2$, we copy $n$ times $S(G, t-1)$ and add the letter $x$ at the beginning of each label of the vertices belonging to  the copy of $S(G,t-1)$ corresponding to $x$. Then for every edge $\{x,y\}$ of $G$, add an edge between vertex $xyy...y$ and vertex $yxx...x$. See, for instance, Figure \ref{FigS(G,3)}. Vertices of the form $xx...x$ are called \textit{extreme vertices} of $S(G,t)$. Notice that for any graph $G$ of order $n$ and any integer $t\ge 2$,  $S(G,t)$  has $n$ extreme vertices and, if $x$ has degree $d(x)$ in $G$, then the extreme vertex $xx...x$ of $S(G,t)$   also has degree  $d(x)$. Moreover,   the degrees of two vertices $yxx...x$ and  $xyy...y$, which connect two copies of $S(G,t-1)$, are  equal to  $d(x)+1$ and $d(y)+1$, respectively.

For any $w\in V^{t-1}$ and $t\ge 2$  the subgraph $\langle V_w \rangle$ of $S(G,t)$, induced by $V_w=\{wx:\; x\in V\}$, is isomorphic to $G$. Notice that there exists only one vertex $u\in V_w$ of the form $w'xx\ldots x$, where $w'\in V^{r}$ for some $r\le t-2$. We will say that $w'xx\ldots x$ is \textit{the extreme vertex} of $\langle V_w \rangle$, which is an extreme vertex in $S(G,t)$ whenever $r=0$. By definition of $S(G,t)$ we deduce the following remark.

\begin{remark}
Let $G=(V,E)$ be a graph, let $t\ge 2$ be an integer and $w\in V^{t-1}$.  If  $u\in V_w$ and $v\in V^t-V_w$ are adjacent in $S(G,t)$, then   either $u$ is the extreme vertex of  $\langle V_w \rangle$ or $u$ is adjacent to the extreme vertex  of  $\langle V_w \rangle$.
\end{remark}


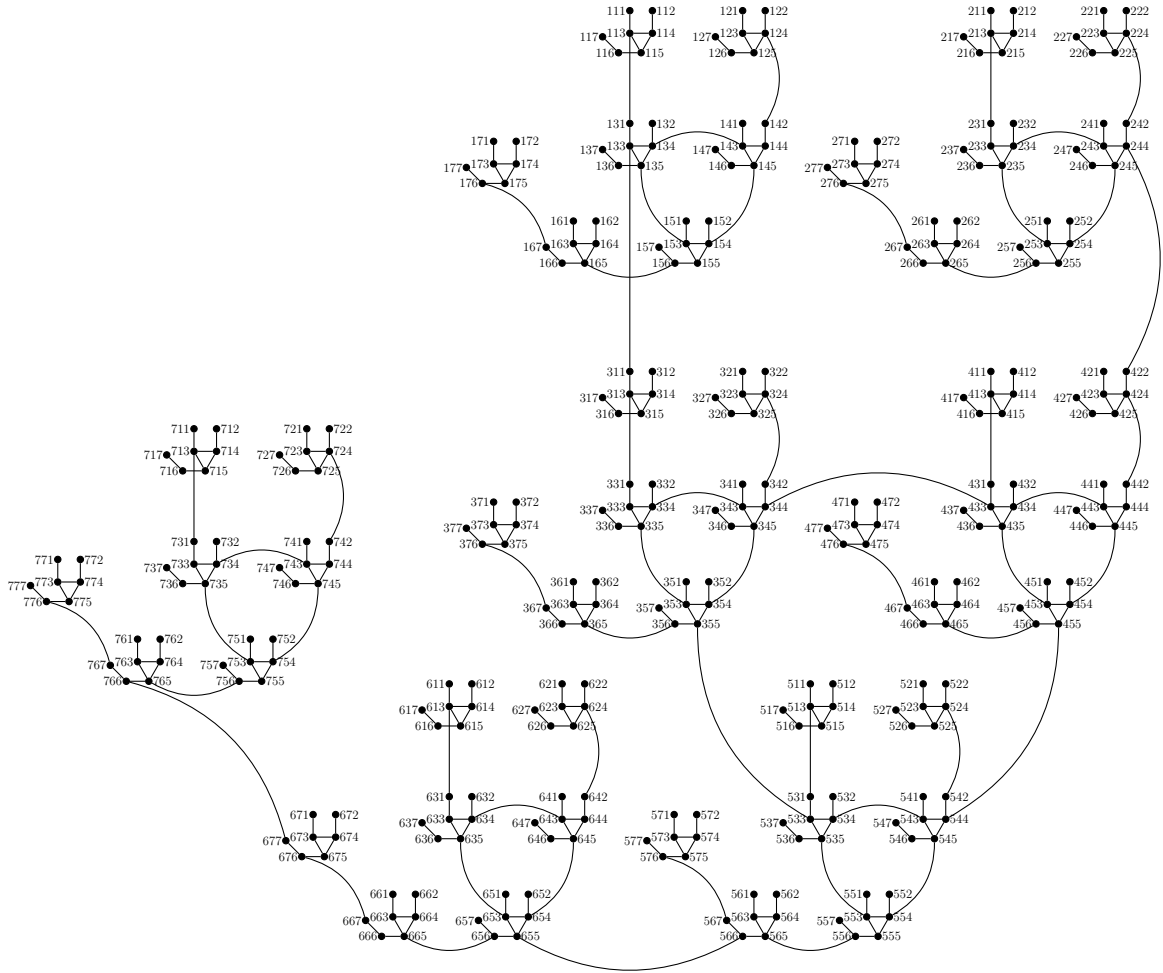
\begin{figure}[h]
\centering
\begin{tikzpicture}[transform shape, inner sep = .3mm]
\def\side{.3};
\pgfmathsetmacro\radius{\side/sqrt(3)};
\pgfmathsetmacro\sideTwo{5*\side};
\pgfmathsetmacro\radiuscenter{\sideTwo/sqrt(3)};
\pgfmathsetmacro\sideThree{16*\side};
\pgfmathsetmacro\radiuscenterThree{\sideThree/sqrt(3)};
\node (center) at (0,0) {};
\foreach \c in {3,4,5}
{
\pgfmathparse{150-120*(\c-3)};
\node (center\c) at ([shift=({\pgfmathresult:\radiuscenterThree cm})]center) {};
\foreach \d in {3,4,5}
{
\pgfmathparse{150-120*(\d-3)};
\node (center\c\d) at ([shift=({\pgfmathresult:\radiuscenter cm})]center\c) {};
\foreach \ind in {3,4,5}
{
\pgfmathparse{150-120*(\ind-3)};
\node [draw=black, shape=circle, fill=black] (\c\d\ind) at ([shift=({\pgfmathresult:\radius cm})]center\c\d) {};
}
\node [draw=black, shape=circle, fill=black] (\c\d1) at ([yshift=\side cm]\c\d3) {};
\node [draw=black, shape=circle, fill=black] (\c\d2) at ([yshift=\side cm]\c\d4) {};
\node [draw=black, shape=circle, fill=black] (\c\d6) at ([xshift=-\side cm]\c\d5) {};
\node [draw=black, shape=circle, fill=black] (\c\d7) at ([shift=({135:\side cm})]\c\d6) {};
\foreach \ind in {2,4,5}
{
\node [scale=.4] at ([xshift=.18 cm]\c\d\ind) {$\c\d\ind$};
}
\foreach \ind in {1,3,6,7}
{
\node [scale=.4] at ([xshift=-.18 cm]\c\d\ind) {$\c\d\ind$};
}
\foreach \u/\v in {1/3,2/4,3/4,3/5,4/5,5/6,6/7}
{
\draw (\c\d\u) -- (\c\d\v);
}
}
\foreach \d in {1,2,6,7}
{
\ifthenelse{\d=1}
{
\node (center\c1) at ([shift=({0,\sideTwo})]center\c3) {};
}
{
\ifthenelse{\d=2}
{
\node (center\c2) at ([shift=({0,\sideTwo})]center\c4) {};
}
{
\ifthenelse{\d=6}
{
\node (center\c6) at ([shift=({-\sideTwo,0})]center\c5) {};
}
{
\node (center\c7) at ([shift=({135:\sideTwo cm})]center\c6) {};
};
};
};
\foreach \ind in {3,4,5}
{
\pgfmathparse{150-120*(\ind-3)};
\node [draw=black, shape=circle, fill=black] (\c\d\ind) at ([shift=({\pgfmathresult:\radius})]center\c\d) {};
}

\node [draw=black, shape=circle, fill=black] (\c\d1) at ([yshift=\side cm]\c\d3) {};
\node [draw=black, shape=circle, fill=black] (\c\d2) at ([yshift=\side cm]\c\d4) {};
\node [draw=black, shape=circle, fill=black] (\c\d6) at ([xshift=-\side cm]\c\d5) {};
\node [draw=black, shape=circle, fill=black] (\c\d7) at ([shift=({135:\side cm})]\c\d6) {};
\foreach \ind in {2,4,5}
{
\node [scale=.4] at ([xshift=.18 cm]\c\d\ind) {$\c\d\ind$};
}
\foreach \ind in {1,3,6,7}
{
\node [scale=.4] at ([xshift=-.18 cm]\c\d\ind) {$\c\d\ind$};
}
\foreach \u/\v in {1/3,2/4,3/4,3/5,4/5,5/6,6/7}
{
\draw (\c\d\u) -- (\c\d\v);
}
}
\foreach \u/\v in {1/3,2/4,3/4,3/5,4/5,5/6,6/7}
{
\ifthenelse{\u=1}
{
\draw (\c\u\v) -- (\c\v\u);
}
{
\ifthenelse{\u=3 \AND \v=5 \OR \u=6}
{
\draw (\c\u\v) to[bend right] (\c\v\u);
}
{
\draw (\c\u\v) to[bend left] (\c\v\u);
};
};
}
}
\foreach \c in {1,2,6,7}
{
\ifthenelse{\c=1}
{
\node (center1) at ([shift=({0,\sideThree})]center3) {};
}
{
\ifthenelse{\c=2}
{
\node (center2) at ([shift=({0,\sideThree})]center4) {};
}
{
\ifthenelse{\c=6}
{
\node (center6) at ([shift=({-\sideThree,0})]center5) {};
}
{
\node (center7) at ([shift=({135:\sideThree cm})]center6) {};
};
};
};
\foreach \d in {3,4,5}
{
\pgfmathparse{150-120*(\d-3)};
\node (center\c\d) at ([shift=({\pgfmathresult:\radiuscenter cm})]center\c) {};
\foreach \ind in {3,4,5}
{
\pgfmathparse{150-120*(\ind-3)};
\node [draw=black, shape=circle, fill=black] (\c\d\ind) at ([shift=({\pgfmathresult:\radius cm})]center\c\d) {};
}
\node [draw=black, shape=circle, fill=black] (\c\d1) at ([yshift=\side cm]\c\d3) {};
\node [draw=black, shape=circle, fill=black] (\c\d2) at ([yshift=\side cm]\c\d4) {};
\node [draw=black, shape=circle, fill=black] (\c\d6) at ([xshift=-\side cm]\c\d5) {};
\node [draw=black, shape=circle, fill=black] (\c\d7) at ([shift=({135:\side cm})]\c\d6) {};
\foreach \ind in {2,4,5}
{
\node [scale=.4] at ([xshift=.18 cm]\c\d\ind) {$\c\d\ind$};
}
\foreach \ind in {1,3,6,7}
{
\node [scale=.4] at ([xshift=-.18 cm]\c\d\ind) {$\c\d\ind$};
}
\foreach \u/\v in {1/3,2/4,3/4,3/5,4/5,5/6,6/7}
{
\draw (\c\d\u) -- (\c\d\v);
}
}
\foreach \d in {1,2,6,7}
{
\ifthenelse{\d=1}
{
\node (center\c1) at ([shift=({0,\sideTwo})]center\c3) {};
}
{
\ifthenelse{\d=2}
{
\node (center\c2) at ([shift=({0,\sideTwo})]center\c4) {};
}
{
\ifthenelse{\d=6}
{
\node (center\c6) at ([shift=({-\sideTwo,0})]center\c5) {};
}
{
\node (center\c7) at ([shift=({135:\sideTwo cm})]center\c6) {};
};
};
};
\foreach \ind in {3,4,5}
{
\pgfmathparse{150-120*(\ind-3)};
\node [draw=black, shape=circle, fill=black] (\c\d\ind) at ([shift=({\pgfmathresult:\radius})]center\c\d) {};
}

\node [draw=black, shape=circle, fill=black] (\c\d1) at ([yshift=\side cm]\c\d3) {};
\node [draw=black, shape=circle, fill=black] (\c\d2) at ([yshift=\side cm]\c\d4) {};
\node [draw=black, shape=circle, fill=black] (\c\d6) at ([xshift=-\side cm]\c\d5) {};
\node [draw=black, shape=circle, fill=black] (\c\d7) at ([shift=({135:\side cm})]\c\d6) {};
\foreach \ind in {2,4,5}
{
\node [scale=.4] at ([xshift=.18 cm]\c\d\ind) {$\c\d\ind$};
}
\foreach \ind in {1,3,6,7}
{
\node [scale=.4] at ([xshift=-.18 cm]\c\d\ind) {$\c\d\ind$};
}
\foreach \u/\v in {1/3,2/4,3/4,3/5,4/5,5/6,6/7}
{
\draw (\c\d\u) -- (\c\d\v);
}
}
\foreach \u/\v in {1/3,2/4,3/4,3/5,4/5,5/6,6/7}
{
\ifthenelse{\u=1}
{
\draw (\c\u\v) -- (\c\v\u);
}
{
\ifthenelse{\u=3 \AND \v=5 \OR \u=6}
{
\draw (\c\u\v) to[bend right] (\c\v\u);
}
{
\draw (\c\u\v) to[bend left] (\c\v\u);
};
};
}
}
\foreach \u/\v in {1/3,2/4,3/4,3/5,4/5,5/6,6/7}
{
\ifthenelse{\u=1}
{
\draw (\u\v\v) -- (\v\u\u);
}
{
\ifthenelse{\u=3 \AND \v=5 \OR \u=6}
{
\draw (\u\v\v) to[bend right] (\v\u\u);
}
{
\draw (\u\v\v) to[bend left] (\v\u\u);
};
};
}
\end{tikzpicture}
\caption{The generalized Sierpi\'{n}ski graph $S(G,3)$. The base  graph $G$ is shown in Figure
\ref{FigSierpinski1y2}.}
\label{FigS(G,3)}
\end{figure}

To the best of our knowledge, \cite{SierpinskiJA} is the first
published paper studying the generalized Sierpi\'{n}ski graphs. In that article, the authors obtained closed formulae for the Randi\'{c} index of polymeric networks modelled by  generalized Sierpi\'{n}ski graphs. Also, the total chromatic number of generalized Sierpi\'{n}ski graphs has recently been  studied  in \cite{GEETHA}.
In this paper we obtain closed formulae for several parameters of generalized Sierpi\'{n}ski graphs $S(G,t)$ in terms of parameters of the base graph $G$.
In particular, we focus on the chromatic,  vertex cover,  clique and  domination numbers.

\section{Some Remarks on Trees}

Given a graph $G$, the order and size of  $S(G,t)$ is obtained in the following remark.

\begin{remark}\label{SizeOrder}
Let  graph $G$ be a graph of order $n$ and size $m$, and let $t$ be a positive integer. Then the order of $S(G,t)$ is
$n^t$ and the size is
  $m\frac{n^t-1}{n-1}.$
\end{remark}
\begin{proof}
By definition of $S(G,t)$, for any $t\ge 2$  we have that the order of $S(G,t)$ is ${\cal O}(S(G,t))=n{\cal O}(S(G,t-1))$ and ${\cal O}(S(G,1)=n$. Hence, ${\cal O}(S(G,t))=n^t$. Analogously,
the size of $S(G,t)$ is ${\cal S}(S(G,t))=n{\cal S}(S(G,t-1))+m$ and ${\cal S}(S(G,1))=m$. Then ${\cal S}(S(G,t))=m\left(n^{t-1}+n^{t-2}+\cdots +1 \right)=m\frac{n^t-1}{n-1}.$
\end{proof}

\begin{corollary}\label{S(T,t)isaTree}
For any tree $T$ and any positive integer $t$, $S(T,t)$ is a tree.
\end{corollary}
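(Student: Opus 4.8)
The plan is to argue by induction on $t$, combining the recursive construction of $S(T,t)$ with the edge count already recorded in Remark~\ref{SizeOrder}. Write $n$ for the order of $T$; since $T$ is a tree its size is $m=n-1$. The base case $t=1$ is immediate, as $S(T,1)=T$.

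For the inductive step, suppose $S(T,t-1)$ is a tree. By the recursive description of $S(G,t)$ given in the introduction, $S(T,t)$ is obtained from the $n$ pairwise vertex-disjoint subgraphs $\langle V_x\rangle$, $x\in V$ (each isomorphic to $S(T,t-1)$, hence a tree by the induction hypothesis), by adding, for every edge $\{x,y\}\in E$, exactly one edge joining $xyy\ldots y\in V_x$ to $yxx\ldots x\in V_y$. First I would check connectedness: each $\langle V_x\rangle$ is connected, and because $T$ is connected, any two of the copies $\langle V_x\rangle$, $\langle V_y\rangle$ are joined by walking along the added edges that follow a path from $x$ to $y$ in $T$; hence $S(T,t)$ is connected. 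Then, invoking Remark~\ref{SizeOrder} with $m=n-1$, the graph $S(T,t)$ has $n^t$ vertices and $(n-1)\cdot\frac{n^t-1}{n-1}=n^t-1$ edges. A connected graph on $p$ vertices with $p-1$ edges is a tree, so $S(T,t)$ is a tree and the induction is complete.

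There is no serious obstacle here; the only point requiring a little care is the structural claim used in the inductive step, namely that the $n$ copies $\langle V_x\rangle$ are vertex-disjoint and that distinct edges of $T$ contribute distinct connecting edges lying between distinct pairs of copies. Both facts are immediate from the vertex labels (the first letter of a label identifies the copy). As an alternative to the connectedness argument one could instead verify acyclicity directly: a cycle contained in a single $\langle V_x\rangle$ contradicts the induction hypothesis, while a cycle meeting at least two copies would, after contracting each copy to a point, traverse each used edge of $T$ at most once (there being a unique connecting edge between any two copies), producing a cycle in the tree $T$ — impossible; then acyclic together with $n^t-1$ edges again yields a tree.

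\begin{proof}
(Proof sketch as above; details to be filled in.)
\end{proof}
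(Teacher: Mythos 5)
Your argument is correct and follows essentially the same route as the paper: both establish that $S(T,t)$ is connected and then combine the order $n^t$ and size $n^t-1$ from Remark~\ref{SizeOrder} (with $m=n-1$) to conclude it is a tree. The induction you add merely spells out the connectedness step that the paper asserts directly from the connectivity of $T$.
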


\begin{proof}
Let $n$ be the order of $T$. By the connectivity of $T$ we have that $S(T,t)$ is connected. On the other hand, by Remark \ref{SizeOrder},
$S(T,t)$ has order $n^t$ and
 size  $n^{t}-1$. Therefore, the result follows.
\end{proof}

The next result gives a formula for the number of leaves in a generalized Sierpi\'{n}ski tree. A vertex with degree one in a tree $T$ is called a \textit{leaf} and a vertex adjacent to a leaf is called a \textit{support}.
The number of leaves  of a tree $T$ will be denoted by $\varepsilon(T)$ and the set of support vertices of $T$ by $\Sup(T)$. Also, if $x\in \Sup(T)$, then  $\varepsilon_T(x)$ will denote the number of leaves of $T$ which are adjacent to $x$.

\begin{theorem}\label{End-Points-of-G}
Let $T$ be a tree of order $n$ having $\varepsilon(T)$ leaves.
   For any positive integer $t$, the number of leaves  of $S(T,t)$ is $$\varepsilon(S(T,t))=\frac{\varepsilon(T)\left(n^t-2n^{t-1}+1\right)}{n-1}.$$
\end{theorem}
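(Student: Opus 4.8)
The plan is to exploit the recursive construction of $S(T,t)$ recalled in the introduction and turn the count of leaves into a first-order linear recurrence in $t$, with $n:=|V(T)|$ held fixed. Write $a_t:=\varepsilon(S(T,t))$; since $S(T,1)=T$ we have the base case $a_1=\varepsilon(T)$. For $t\ge 2$, recall that $S(T,t)$ is obtained by taking, for each $x\in V(T)$, a copy $\langle V_x\rangle\cong S(T,t-1)$ (prefix every label by $x$), these copies being pairwise disjoint, and then adding, for each edge $\{x,y\}\in E(T)$, a single edge joining $xyy\cdots y$ to $yxx\cdots x$. By Corollary~\ref{S(T,t)isaTree}, $S(T,t-1)$ and $S(T,t)$ are trees, so ``leaf'' means simply ``vertex of degree one'', and we may count the leaves of $S(T,t)$ by going through the $n$ copies one at a time.

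The key step is to determine exactly which vertices of a copy $\langle V_x\rangle$ change degree when passing from $S(T,t-1)$ to $S(T,t)$. The only vertices of $\langle V_x\rangle$ incident with an added edge are the $xyy\cdots y$ with $y$ a neighbour of $x$ in $T$; each such vertex corresponds, inside the copy of $S(T,t-1)$, to the extreme vertex $yy\cdots y$, and its degree is increased by exactly $1$, while every other vertex of the copy keeps the degree it had in $S(T,t-1)$. A vertex of $\langle V_x\rangle$ is therefore a leaf of $S(T,t)$ if and only if it is a leaf of $S(T,t-1)$ and receives no added edge (the alternative, that an added edge raises a vertex's degree to $1$, would force it to have had degree $0$, which is impossible). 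Using the fact, noted in the introduction, that the extreme vertex $yy\cdots y$ of $S(T,t-1)$ has degree $d_T(y)$, one sees that the leaves of $S(T,t-1)$ which fail to remain leaves inside $\langle V_x\rangle$ are precisely the extreme vertices $yy\cdots y$ with $d_T(y)=1$ and $y$ adjacent to $x$. Since a leaf of $T$ has a unique neighbour, such $y$ are exactly the leaves of $T$ whose support is $x$; hence $\langle V_x\rangle$ contributes $\varepsilon(S(T,t-1))-\varepsilon_T(x)$ leaves, where we set $\varepsilon_T(x):=0$ when $x\notin\Sup(T)$.

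Summing over $x\in V(T)$ and noting that each leaf of $T$ is counted exactly once, at its unique support, so that $\sum_{x\in V(T)}\varepsilon_T(x)=\varepsilon(T)$, we obtain the recurrence
\[
a_t=n\,a_{t-1}-\varepsilon(T)\qquad(t\ge 2),\qquad a_1=\varepsilon(T).
\]
The rest is routine: the general solution is $a_t=Cn^{t}+\varepsilon(T)/(n-1)$ (homogeneous part $Cn^t$, constant particular solution $\varepsilon(T)/(n-1)$, valid since $n\ge 2$), and the initial condition $a_1=\varepsilon(T)$ fixes $C=\varepsilon(T)(n-2)/\bigl(n(n-1)\bigr)$, giving
\[
a_t=\frac{\varepsilon(T)\bigl((n-2)n^{t-1}+1\bigr)}{n-1}=\frac{\varepsilon(T)\bigl(n^t-2n^{t-1}+1\bigr)}{n-1},
\]
which is the claimed formula; a quick check shows it also returns $a_1=\varepsilon(T)$.

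I expect the only delicate point to be the degree bookkeeping of the second paragraph --- in particular, keeping straight that within a copy the extreme vertices are the unique vertices whose degree can increase, and that whether a given extreme vertex actually does increase depends on adjacency in $T$ --- together with the degenerate small cases (e.g.\ $T=K_2$, where both vertices are leaves and each is the support of the other, so $\sum_x\varepsilon_T(x)=\varepsilon(T)$ still holds, and the vacuous case $n=1$). Once the recurrence is established, everything else is elementary.
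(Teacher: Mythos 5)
Your proposal is correct and follows essentially the same route as the paper: decompose $S(T,t)$ into the $n$ copies of $S(T,t-1)$, observe that the only leaves destroyed by the linking edges in the copy of $x$ are the vertices $xyy\cdots y$ with $y$ a leaf of $T$ supported by $x$, deduce the recurrence $\varepsilon(S(T,t))=n\,\varepsilon(S(T,t-1))-\varepsilon(T)$, and solve it. The only (immaterial) differences are your more explicit degree bookkeeping and your solving the recurrence via the general solution rather than unrolling it.
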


\begin{proof}
Let $t\ge 2.$ For any $x \in V$, we denote by $S_x(T,t-1)$  the copy of $S(T,t-1)$ corresponding to $x$  in $S(T,t)$, \textit{i.e.}, $S_x(T,t-1)$  is the subgraph of $S(T,t)$ induced by the set $\{xw:\; w\in V^{t-1}\}$, which is isomorphic to $S(T,t-1)$.  To obtain the result, we only need to determine the contribution of  $S_x(T,t-1)$ to the number of leaves of $S(T,t)$, for all $x\in V$.  By definition of $S(T,t)$, there exists an edge of $S(T,t)$ connecting the vertex $xy...y$ of  $S_x(T,t-1)$ with the vertex $yx...x$ of $S_y(T,t-1)$ if and only if $x$ and $y$ are adjacent in $T$.  Hence, a leaf $xy...y$ of $S_x(S(T,t-1)$ is adjacent in $S(T,t)$ to a vertex $yx...x$ of  $S_y(T,t-1)$ if and  only if $y$ is a leaf of $T$ and $x$ is its support vertex. Thus, if $x\in \Sup(T)$, then the contribution of $S_x(T,t-1)$ to the number of leaves of $S(T,t)$
is $\varepsilon(S(T,t-1))-\varepsilon_T(x)$ and, if $x\not\in \Sup(T)$, then the contribution of $S_x(T,t-1)$ to the number of leaves of $S(T,t)$
is $\varepsilon(S(T,t-1)$.  Then we obtain,
\begin{align*}\varepsilon(S(T,t))&=(n-|\Sup(T)|)\varepsilon(S(T,t-1))+\sum_{x\in \Sup(T)}(\varepsilon(S(T,t-1))-\varepsilon_T(x))\\
&=n\varepsilon(S(T,t-1))-\varepsilon(T).
\end{align*}
Now, since $\varepsilon(S(T,1))=\varepsilon(T)$, we have that
$$\varepsilon(S(T,t))=\varepsilon(T)\left(n^{t-1}-n^{t-2}-\cdots -n-1 \right)=\varepsilon(T)\left( n^{t-1}-\frac{\left(n^{t-1}-1\right)}{n-1}\right).$$
Therefore, the result follows.
\end{proof}

\section{Chromatic Number and Clique Number}

The \textit{chromatic number} of a graph $G=(V,E)$, denoted by $\chi(G)$,  is the smallest number of colors needed to color the vertices of $G$ so that no two adjacent vertices share the same color.
A \textit{proper vertex-colouring} of $G$ is a map $f: V\longrightarrow \{1,2,...,k\}$   such that for any edge $\{u,v\}$  of $G$, $f(u)\ne f(v)$. The elements of $\{1,2,...,k\}$ are called
colours, the vertices of one colour form a \textit{colour class} and we say that $f$ is a $k$-\textit{colouring}.
So the chromatic number of $G$ is the minimum $k$ such that there exists a  $k$-colouring. For instance,  for any bipartite graph $G$, $\chi(G)=2.$
Since every  tree is a bipartite graph, by Corollary \ref{S(T,t)isaTree} we conclude that for any tree $T$ and any positive integer $t$, $\chi(S(T,t))=2$.

The problem of finding  chromatic number of a graph is NP-hard, \cite{Garey1979}. This suggests finding the chromatic number
for special classes of graphs or obtaining good bounds on this invariant.  As shown in \cite{Parisse}, $\chi(K_n,t)=n$. We shall show that the chromatic number of a generalized Sierpi\'{n}ski graph is determined by the chromatic number of its base graph.

\begin{theorem}\label{TheoremChromatic}
For any graph $G$ and any positive integer $t$, $$\chi(S(G,t))=\chi(G).$$
\end{theorem}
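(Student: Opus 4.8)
The plan is to prove the two inequalities $\chi(S(G,t)) \ge \chi(G)$ and $\chi(S(G,t)) \le \chi(G)$ separately, and to do the hard direction by induction on $t$. The lower bound is immediate: the excerpt notes that for any $w \in V^{t-1}$ the induced subgraph $\langle V_w\rangle$ is isomorphic to $G$, so $S(G,t)$ contains $G$ as a subgraph, whence $\chi(S(G,t)) \ge \chi(G)$. (For $t=1$ there is nothing to prove since $S(G,1)=G$.) So the whole content is the upper bound $\chi(S(G,t)) \le \chi(G)$.

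For the upper bound I would argue by induction on $t$, the base case $t=1$ being trivial. Fix a proper colouring $c\colon V \to \{1,\dots,k\}$ of $G$ with $k=\chi(G)$ colours. Assume we have a proper $k$-colouring $c_{t-1}$ of $S(G,t-1)$. Recall the recursive description: $S(G,t)$ consists of $n$ disjoint copies $S_x(G,t-1)$, one for each $x\in V$, together with one connecting edge between the vertices $xyy\cdots y$ and $yxx\cdots x$ for each edge $\{x,y\}\in E$. The idea is to colour the copy $S_x(G,t-1)$ by a \emph{permuted} version of $c_{t-1}$, where the permutation depends on $x$, so that all the connecting edges become properly coloured while each copy stays properly coloured (any permutation of the colours of a proper colouring is still proper). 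Concretely, within the copy $S_x(G,t-1)$ I recolour a vertex $xu$ (with $u\in V^{t-1}$) by a colour that encodes both $c(x)$ and $c_{t-1}(u)$ — for instance, think of the pair $(c(x), c_{t-1}(u))$ but then identify it with a single colour in $\{1,\dots,k\}$ via an affine map mod $k$, say $c_t(xu) \equiv c(x) + \sigma\bigl(c_{t-1}(u)\bigr) \pmod k$ for a suitable bijection $\sigma$; the point is to choose the recolouring so that the extreme vertex $xx\cdots x$ of the copy $S_x(G,t-1)$ receives colour $c(x)$.

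The verification then splits into two checks. First, edges inside a copy $S_x(G,t-1)$: these are fine because within a fixed copy we have applied a single bijection of the colour set to the proper colouring $c_{t-1}$, so adjacent vertices still get distinct colours. Second, the connecting edges: an edge joins $xyy\cdots y \in S_x(G,t-1)$ to $yxx\cdots x \in S_y(G,t-1)$ where $\{x,y\}\in E$. The vertex $xyy\cdots y$ is adjacent (inside its copy) to the extreme vertex $xx\cdots x$, and one checks from the recursive structure that $yy\cdots y$ is the extreme vertex of the relevant sub-copy, so its colour under $c_{t-1}$ is $c(y)$; similarly $xx\cdots x$ as a word in $V^{t-1}$ gets colour $c(x)$ under $c_{t-1}$. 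Feeding this through the recolouring formula, $c_t(xyy\cdots y)$ depends on the pair $(c(x),c(y))$ and $c_t(yxx\cdots x)$ on the pair $(c(y),c(x))$, and since $c(x)\ne c(y)$ (as $\{x,y\}$ is an edge of $G$) one arranges the bijection $\sigma$ so that these two values differ.

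The main obstacle is purely bookkeeping: pinning down exactly which colour the extreme vertex of each copy and sub-copy receives, and choosing the family of colour-bijections (one per vertex of $G$) so that the two requirements — "each copy is recoloured by a bijection" and "the endpoints of every connecting edge differ" — hold simultaneously. A clean way to finesse this is to first normalise $c_{t-1}$ inductively so that the extreme vertex of $S(G,t-1)$ labelled $xx\cdots x$ always carries colour $c(x)$; then for the copy $S_x(G,t-1)$ one applies the transposition of colours $c(x)$ and $c_{t-1}(x\cdots x\text{-copy's extreme})$ appropriately, or more simply uses that $G$ itself furnishes compatible colours at the "docking" vertices. Once the normalisation is in place, both checks above are one-line computations, and combining the two inequalities gives $\chi(S(G,t))=\chi(G)$.
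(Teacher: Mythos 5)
Your plan works and, once the bookkeeping is actually carried out, it collapses to exactly the colouring the paper uses, but you reach it by a more roundabout route. The paper's proof is non-inductive: it colours every vertex by the colour of its last letter, $f_1(wx)=f(x)$, and observes that every edge of $S(G,t)$ has the form $\{wxyy\cdots y,\,wyxx\cdots x\}$ with $\{x,y\}\in E(G)$, so the last letters of adjacent vertices are adjacent in $G$ and $f_1$ is automatically proper --- no induction, no normalisation, no permutations. In your inductive scheme the per-copy colour bijections are in fact unnecessary: if you keep your normalisation ``the extreme vertex $zz\cdots z$ of $S(G,t-1)$ carries colour $c(z)$'' and simply set $c_t(xu)=c_{t-1}(u)$ (identity bijection on every copy), then each copy stays properly coloured, the normalisation propagates to level $t$, and the connecting edge $\{xyy\cdots y,\,yxx\cdots x\}$ receives colours $c(y)\ne c(x)$; unrolling the recursion gives precisely $c_t(u_1u_2\cdots u_t)=c(u_t)$, i.e.\ the paper's colouring. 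Two slips in your write-up deserve flagging, although neither is fatal because the identity-permutation version just described repairs them. First, the particular instantiation you offer, $c_t(xu)\equiv c(x)+\sigma\bigl(c_{t-1}(u)\bigr)\pmod{k}$, cannot satisfy the requirement you impose on it (that the extreme vertex of the copy $S_x(G,t-1)$ get colour $c(x)$): that would force $\sigma(c(x))\equiv 0 \pmod{k}$ for every $x$, so $\sigma$ would send all colours used by $c$ to one value and could not be a bijection once $\chi(G)\ge 2$. Second, the claim that $xyy\cdots y$ is adjacent inside its copy to the extreme vertex $xx\cdots x$ is false for $t\ge 3$ (two distinct extreme vertices of $S(G,t-1)$ are never adjacent when $t-1\ge 2$); fortunately your colour computation only needs that $yy\cdots y$ is an extreme vertex of $S(G,t-1)$ carrying colour $c(y)$, so nothing breaks. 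The lower bound via the induced copy $\langle V_w\rangle\cong G$ coincides with the paper's argument.
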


\begin{proof}
Let $w$ be a word of length $t-1$ on the alphabet $V$. By definition of $S(G,t)$, the subgraph $\langle V_w \rangle$ of $S(G,t)$ induced by the set $V_w=\{wx:\; x\in V(G)\}$ is isomorphic to $G$. Hence, $\chi(S(G,t))\ge \chi(\langle V_w \rangle)=\chi(G)$.

Now, let $f: V\longrightarrow \{1,2,...,k\}$ be a proper vertex-colouring of $G$ and let $f_1: V^t\longrightarrow \{1,2,...,k\}$ be a map defined by $f_1(wx)=f(x)$, for all $wx\in V^t$. If two vertices $wx,w'y\in V^t$ are adjacent in $S(G,t)$, then  $x$ and $y$ are adjacent in $G$. Hence, if $wx,w'y\in V^t$ are adjacent in $S(G,t)$, then  $f_1(wx)=f(x)\ne f(y)=f_1(w'y)$ and, as a consequence, $f_1$ is a  proper vertex-colouring of $S(G,t)$. Therefore, $\chi(S(G,t))\le \chi(G)$.
\end{proof}

As a direct consequence of Theorem \ref{TheoremChromatic} we deduce the following result.

\begin{corollary}
For any bipartite graph $G$ and any positive integer $t$, $S(G,t)$ is bipartite.
\end{corollary}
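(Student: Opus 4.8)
The plan is straightforward: a bipartite graph is precisely a graph whose chromatic number is at most $2$, so the corollary is an immediate specialization of Theorem~\ref{TheoremChromatic}. I would first recall this characterization, then invoke the theorem to transfer the bound from $G$ to $S(G,t)$, handling the trivial edge case separately.

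First I would note that if $G$ is bipartite, then by definition its vertex set admits a partition into two independent sets, which is the same as saying $\chi(G)\le 2$. (If $G$ has no edges the chromatic number is $1$, otherwise it is exactly $2$; in either case the relevant inequality holds.) Next I would apply Theorem~\ref{TheoremChromatic}, which gives $\chi(S(G,t))=\chi(G)\le 2$. Finally I would conclude that a graph with chromatic number at most $2$ is bipartite: a proper $2$-colouring of $S(G,t)$ exhibits the required partition of $V^t$ into two independent sets. Hence $S(G,t)$ is bipartite.

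There is essentially no obstacle here — the only point requiring a word of care is the degenerate situation where $\chi(G)=1$, i.e.\ $G$ (and therefore $S(G,t)$, since its edge count is $m\frac{n^t-1}{n-1}=0$ by Remark~\ref{SizeOrder}) is edgeless; such graphs are vacuously bipartite, so the statement still holds. For a cleaner write-up one could simply say: ``Since $G$ is bipartite, $\chi(G)\le 2$, and by Theorem~\ref{TheoremChromatic}, $\chi(S(G,t))=\chi(G)\le 2$, so $S(G,t)$ is bipartite.'' No induction, no appeal to the recursive structure of $S(G,t)$, and no calculation beyond what Theorem~\ref{TheoremChromatic} already supplies.
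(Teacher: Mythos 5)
Your proposal is correct and matches the paper's intent exactly: the paper states this corollary as a direct consequence of Theorem~\ref{TheoremChromatic}, relying on the same characterization of bipartite graphs via $\chi(G)\le 2$ that you spell out. Your extra care with the edgeless case is fine but not needed beyond what you already note.
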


 A \textit{clique} of a graph $G=(V,E)$ is a subset $C\subseteq V$ such that for any pair of different vertices $v,w \in \ C$, there exists an edge $\{v,w\}\in E$, \textit{i.e.}, the subgraph induced by $C$ is complete.  The \textit{clique number} of a graph $G$, denoted by   $\omega(G)$,  is the number of vertices in a maximum clique of $G$. The chromatic number  of a graph  is equal to or greater than its clique number, \textit{i.e}.,
$\chi(G)\ge \omega(G)$.

 It is well-known that the problem of finding a maximum clique is NP-complete, \cite{Garey1979}.
 We shall show that the clique number of a generalized Sierpi\'{n}ski graph is equal to the clique number of its base graph.

\begin{theorem}\label{TheoremCliqueNumber}
For any graph $G$ of order $n$ and any positive integer $t$, $$\omega(S(G,t))=\omega(G).$$
\end{theorem}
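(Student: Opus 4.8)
The plan is to prove the two inequalities separately. For the lower bound $\omega(S(G,t))\ge\omega(G)$, I would argue exactly as in the proof of Theorem \ref{TheoremChromatic}: for any word $w\in V^{t-1}$ the induced subgraph $\langle V_w\rangle$ is isomorphic to $G$, so a maximum clique of $G$ lifts to a clique of the same size inside $S(G,t)$.

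For the upper bound $\omega(S(G,t))\le\omega(G)$, the cleanest route is to use the ``last letter'' map $\pi:V^t\to V$ defined by $\pi(u_1u_2\cdots u_t)=u_t$. First I would check that $\pi$ is a graph homomorphism from $S(G,t)$ to $G$: if $\{u,v\}$ is an edge of $S(G,t)$ with branching index $i$ as in the definition, then $\{u_i,v_i\}\in E$ and, by condition (iii), $\{u_t,v_t\}=\{u_i,v_i\}$ (the case $i=t$ being immediate); in particular $u_t\ne v_t$ and $\{u_t,v_t\}\in E$. Since $G$ has no loops, a homomorphism into $G$ is injective on every clique and carries it to a clique of the same cardinality; applying this to a maximum clique of $S(G,t)$ produces a clique of $G$ of size $\omega(S(G,t))$, whence $\omega(S(G,t))\le\omega(G)$. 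Combining the two bounds gives the theorem.

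An alternative, more hands-on argument would be an induction on $t$ via the recursive construction: a clique $C$ of $S(G,t)$ either lies entirely inside one of the $n$ copies of $S(G,t-1)$ — so that $|C|\le\omega(S(G,t-1))=\omega(G)$ by the inductive hypothesis — or it meets at least two copies, and then, since at most one edge joins any two distinct copies, $C$ must meet exactly two copies in exactly one vertex each, giving $|C|=2\le\omega(G)$ (note $G$ has an edge in this case). The delicate point in this second approach is precisely the claim that a clique cannot straddle three or more copies, nor have two vertices in one copy while also touching another; the homomorphism argument avoids this case analysis altogether, so I would present that one as the main proof and perhaps remark on the inductive alternative.
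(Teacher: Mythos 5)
Your argument is correct, but your main (homomorphism) proof is a genuinely different route from the paper's. The paper proves $\omega(S(G,t))=\omega(S(G,t-1))$ and iterates: the lower bound comes from a copy $\langle V_x\rangle\cong S(G,t-1)$, and for the upper bound it takes a maximum clique $C$ and argues that if $C$ contains vertices from two different copies $V_x$ and $V_y$, these must be the unique pair $xyy\ldots y$, $yxx\ldots x$ joined by the single edge between the copies, so $|C|\le 2$ unless $C$ lies entirely in one copy --- exactly the case analysis your ``alternative'' sketch reproduces, and whose delicate point (a clique cannot straddle three copies, nor have two vertices in one copy plus one outside) you correctly identify; the paper handles it via the observation that each of $xyy\ldots y$ and $yxx\ldots x$ has only one neighbour outside its own copy. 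Your projection $\pi(u_1\cdots u_t)=u_t$ instead gives the upper bound in one stroke: the verification that adjacency in $S(G,t)$ forces $u_t\ne v_t$ and $\{u_t,v_t\}\in E$ is exactly the fact the paper already uses in the proof of Theorem \ref{TheoremChromatic}, and since adjacent vertices get distinct adjacent images, $\pi$ is injective on any clique and sends it to a clique of $G$ of the same size. What each approach buys: yours avoids the structural case analysis entirely, needs no induction on $t$, and unifies Theorems \ref{TheoremChromatic} and \ref{TheoremCliqueNumber} under the single statement that $\pi$ is a homomorphism onto $G$ (which bounds both $\chi$ and $\omega$ from above, with the embedded copies of $G$ giving both lower bounds); the paper's recursive argument, on the other hand, exhibits the finer structural fact that any clique of $S(G,t)$ with more than two vertices lies inside a single copy of $S(G,t-1)$, which is of some independent interest. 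As a side remark, your handling of the two-copy case ($|C|=2\le\omega(G)$ because $G$ then has an edge) is actually stated more carefully than the paper's ``$|C|=2=\omega(S(G,t-1))$'', which should be an inequality.
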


\begin{proof}
We shall show that for any $t\ge 2$, $\omega(S (G,t))=\omega(S(G,t-1))$. Let $x\in V$ and let $\langle V_x \rangle$ be the subgraph of $S(G,t)$ induced by the set $V_x=\{xw:\; w\in V^{t-1}\}$. Since  $\langle V_x \rangle \cong S(G,t-1)$,
$\omega(S(G,t))\ge \omega(\langle V_x \rangle)=\omega(S(G,t-1)).$

Now, let $C$ be a maximum clique  of $S(G,t)$ and
 $xw_1,yw_2\in C$. If $x\ne y$, then $yw_2=yxx\ldots x$  is the only vertex not belonging to $V_x$ which is adjacent to  $xw_1=xyy\ldots y$ and, analogously, $xw_1=xyy\ldots y$  is the only vertex not belonging to $V_y$ which is adjacent to  $yw_2=yxx\ldots x$. Hence, $x\ne y$ leads to $|C|=2=\omega(S(G,t-1))$ and so $|C|>2$ leads to $x=y$  which implies that $C\subset V_x$.  Thus,  $\omega(S(G,t))=|C|\le \omega(\langle V_x \rangle)=\omega(S (G,t - 1))$.

Therefore, $\omega(S (G,t))=\omega(S(G,t-1))=\cdots=\omega(S(G,1))=\omega(G)$.
\end{proof}

\section{Vertex Cover Number and Independence Number}

 A \textit{vertex cover}  of a graph $G$ is a set of vertices such that each edge of $G$ is incident to at least one vertex of the set. The \emph{vertex cover number} of $G$, denoted by $\beta(G)$, is the smallest cardinality of a vertex cover of $G$.
 For example, in the graph $G$ of Figure \ref{FigSierpinski1y2}, $\{3,4,6\}$ is a vertex cover of minimum cardinality and so $\beta(G)=3$.

It is well-known that the problem of finding a minimum vertex cover is a classical optimization problem in computer science and is a typical example of an NP-hard optimization problem, \cite{Garey1979}.   As the next result shows, the vertex cover number of a generalized Sierpi\'{n}ski graph can be compute from the vertex cover number and the order of  the base graph.

\begin{theorem}\label{TheoremVertexCover}
For any graph $G$ of order $n$ and any positive integer $t$, $$\beta(S(G,t))=n^{t-1}\beta(G).$$
\end{theorem}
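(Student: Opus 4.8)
The plan is to prove the equality by two matching inequalities, both proceeding by induction on $t$. The base case $t=1$ is trivial since $S(G,1)=G$ and $n^{0}=1$. For the lower bound $\beta(S(G,t))\ge n^{t-1}\beta(G)$, recall that $S(G,t)$ contains $n$ vertex-disjoint copies of $S(G,t-1)$, namely the subgraphs $\langle V_x\rangle$ for $x\in V$, whose union contains \emph{all} edges of $S(G,t)$ except the $m$ ``connecting'' edges of the form $\{xyy\ldots y,\,yxx\ldots x\}$. A vertex cover of $S(G,t)$ restricted to any one copy $\langle V_x\rangle$ must cover all edges of that copy, hence has size at least $\beta(S(G,t-1))$; summing over the $n$ disjoint copies gives $\beta(S(G,t))\ge n\,\beta(S(G,t-1))$, and the induction hypothesis finishes this direction.

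For the upper bound I would exhibit a vertex cover of size $n^{t-1}\beta(G)$. Fix a minimum vertex cover $Q$ of $G$ with $|Q|=\beta(G)$, and take $S=\{wx:\ w\in V^{t-1},\ x\in Q\}$, which has cardinality $n^{t-1}\beta(G)$. To see $S$ is a vertex cover of $S(G,t)$, let $\{u,v\}$ be any edge. If $u,v$ lie in a common copy $\langle V_w\rangle\cong G$, then $u=wx$, $v=wy$ with $\{x,y\}\in E(G)$, so $x\in Q$ or $y\in Q$ and the edge is covered. Otherwise $\{u,v\}$ is a connecting edge $\{wxyy\ldots y,\,wyxx\ldots x\}$ coming from an edge $\{x,y\}\in E(G)$; since $Q$ covers $\{x,y\}$, one of $x,y$ is in $Q$, and correspondingly one of the two endpoints has its final letter in $Q$, hence lies in $S$. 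Thus every edge is covered and $\beta(S(G,t))\le n^{t-1}\beta(G)$.

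The only subtle point — and the one I would state carefully — is the lower bound step $\beta(S(G,t))\ge n\,\beta(S(G,t-1))$: one must note that the $n$ copies $\langle V_x\rangle$ are pairwise vertex-disjoint (they partition $V^t$ according to the first letter), so a vertex cover of $S(G,t)$ decomposes as a disjoint union over these copies, each piece being a vertex cover of the corresponding $S(G,t-1)$. Everything else is a routine unwinding of the recursive structure described in the introduction. Alternatively, one could phrase the whole argument via the complementary identity $\beta(S(G,t))=n^{t}-\alpha(S(G,t))$ together with an analogous recursion for the independence number $\alpha$, but the direct two-inequality approach above seems cleanest.
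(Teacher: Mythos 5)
Your proposal is correct and follows essentially the same strategy as the paper: the upper bound via the lifted cover $\{wx:\ w\in V^{t-1},\ x\in Q\}$ is identical, and your lower bound differs only in packaging, since you induct on $t$ using the $n$ disjoint first-letter copies of $S(G,t-1)$, while the paper sums $\beta(\langle V_w\rangle)$ directly over the $n^{t-1}$ disjoint copies of $G$ indexed by $w\in V^{t-1}$ (which is what your induction unrolls to).
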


\begin{proof}
Let $w\in V^{t-1}$ be a word of length $t-1$ on the alphabet $V$. By definition of $S(G,t)$, the subgraph $\langle V_w \rangle$ of $S(G,t)$ induced by the set $V_w=\{wx:\; x\in V\}$ is isomorphic to $G$. Hence,
$$\beta(S(G,t))\ge \sum_{w\in V^{t-1}(G)}\beta(\langle V_w \rangle)=n^{t-1}\beta(G).$$

Now, let $C\subset V$ be a vertex cover  of $G$ of cardinality $|C|=\beta(G)$ and let $$C'=\{wv:\; v\in C\; {\rm and}\; w\in V^{t-1}\}.$$
Since $\langle V_w \rangle \cong G$, for any $w\in V^{t-1}$ we have that $C'_w=\{wv:\; v\in C\}\subset C'$ is a vertex cover of $\langle V_w \rangle$. In addition, if two vertices $w_1y, w_2x\in V^t$, $w_1\ne w_2$, are  adjacent in $S(G,t)$, then $x$ and $y$ are adjacent in $G$ and so $x\in C$ or $y\in C$.  Hence,  $w_1y\in C'$ or $w_2x\in C'$.
Therefore, $C'$ is a vertex cover of $S(G,t)$ and, as a consequence, $\beta(S(G,t))\le |C'|=n^{t-1}|C|= n^{t-1}\beta(G).$
\end{proof}

Recall that the largest cardinality of a set of vertices of $G$, no two of which are adjacent, is called the \emph{independence number} of $G$ and is denoted by $\alpha(G)$. For example, in the graph $G$ of Figure \ref{FigSierpinski1y2}, $\{1,2,5,7\}$ is an independen set  of maximum cardinality and so $\alpha(G)=4$.

The following well-known result, due to Gallai, states the relationship between the independence number and the vertex cover number of a graph. Such a result will provide us with another very useful result on generalized Sierpi\'{n}ski graphs.

\begin{theorem}{\rm \cite{gallai}}\label{th_gallai}
For any graph  $G$ of order $n$,
$\beta(G)+\alpha(G) = n.$
\end{theorem}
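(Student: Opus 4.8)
The plan is to exploit the natural complementation duality between independent sets and vertex covers of $G$. First I would isolate the key elementary claim: a subset $S\subseteq V$ is an independent set of $G$ if and only if its complement $V\setminus S$ is a vertex cover of $G$. To see this, observe that $S$ is independent exactly when no edge of $G$ has both endpoints in $S$; equivalently, every edge of $G$ has at least one endpoint lying outside $S$, i.e.\ in $V\setminus S$; and this last condition is precisely the definition of $V\setminus S$ being a vertex cover.

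Granting this claim, both inequalities follow immediately. Starting from a maximum independent set $S$, so that $|S|=\alpha(G)$, the set $V\setminus S$ is a vertex cover of cardinality $n-\alpha(G)$, which gives $\beta(G)\le n-\alpha(G)$. Conversely, starting from a minimum vertex cover $C$, so that $|C|=\beta(G)$, the set $V\setminus C$ is an independent set of cardinality $n-\beta(G)$, which gives $\alpha(G)\ge n-\beta(G)$, that is, $\beta(G)\ge n-\alpha(G)$. Combining the two estimates yields $\beta(G)=n-\alpha(G)$, equivalently $\beta(G)+\alpha(G)=n$.

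There is essentially no serious obstacle here: the only point that requires any care is to verify the complementation claim as a genuine ``if and only if'', since the forward direction is what converts a maximum independent set into a vertex cover (used for the first inequality), while the backward direction converts a minimum vertex cover into an independent set (used for the second). Since the statement is classical and due to Gallai, one could alternatively just cite \cite{gallai}; but the self-contained argument sketched above is short enough to record in full.
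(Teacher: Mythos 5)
Your proof is correct and complete: the complementation claim (a set is independent if and only if its complement is a vertex cover) together with the two resulting inequalities is exactly the standard argument for Gallai's identity. The paper itself offers no proof of this statement --- it is simply quoted with a citation to Gallai --- so your self-contained argument is a valid (and indeed the classical) justification of the result as used in the paper.
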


By using this result and  Theorem  \ref{TheoremVertexCover} we obtain a formula for the independence number of $S(G,t)$.

\begin{theorem}\label{TheoremIndependenceNumber}
For any graph $G$ of order $n$ and any positive integer $t$, $$\alpha(S(G,t))=n^{t-1}\alpha(G).$$
\end{theorem}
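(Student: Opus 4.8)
The plan is to derive the formula directly from the three results already established: Gallai's identity (Theorem \ref{th_gallai}), the vertex cover formula (Theorem \ref{TheoremVertexCover}), and the order count (Remark \ref{SizeOrder}). No combinatorial construction is needed; the argument is purely arithmetic once these ingredients are in place.

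First I would record that, by Remark \ref{SizeOrder}, the graph $S(G,t)$ has order $n^t$. Applying Theorem \ref{th_gallai} to $S(G,t)$ then gives
\[
\alpha(S(G,t)) = n^t - \beta(S(G,t)).
\]
Next I would substitute the value of $\beta(S(G,t))$ supplied by Theorem \ref{TheoremVertexCover}, namely $\beta(S(G,t)) = n^{t-1}\beta(G)$, obtaining $\alpha(S(G,t)) = n^t - n^{t-1}\beta(G)$.

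Finally I would apply Theorem \ref{th_gallai} to the base graph $G$ itself, which has order $n$, to write $\beta(G) = n - \alpha(G)$. Plugging this in yields
\[
\alpha(S(G,t)) = n^t - n^{t-1}\bigl(n - \alpha(G)\bigr) = n^t - n^t + n^{t-1}\alpha(G) = n^{t-1}\alpha(G),
\]
which is the claimed identity.

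There is no real obstacle here: the only thing to be careful about is that Gallai's theorem is invoked for two different graphs (once for $S(G,t)$ with order $n^t$, once for $G$ with order $n$), and that the order count from Remark \ref{SizeOrder} is used to supply the total $n^t$. Everything else is a one-line simplification.
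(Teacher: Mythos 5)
Your argument is correct and is exactly the route the paper intends: Theorem \ref{TheoremIndependenceNumber} is stated there as a direct consequence of Gallai's identity (Theorem \ref{th_gallai}) combined with Theorem \ref{TheoremVertexCover} and the order count $n^t$ from Remark \ref{SizeOrder}. Your computation simply makes that one-line deduction explicit, so there is nothing to add.
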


\section{Domination Number}

For a  vertex
$v$ of $G=(V,E)$, $N_G(v)$ denotes the set of neighbours that $v$ has in $G$.
A set $D\subseteq V$  is {\it dominating} in $G$ if every vertex of $V-D$ has at
least one neighbour in $D$, \textit{i.e.}, $D\cap N_G(u)\ne \emptyset$, for all $u\in V-D$. The {\it domination number} of $G$, denoted by  $\gamma (G)$, is the minimum cardinality among all dominating sets in $G.$ A   dominating set of cardinality $\gamma (G)$ is called a $\gamma (G)$-set. Let ${\cal D}(G)$ be the set of all  $\gamma (G)$-sets. The dominating set problem, which concerns testing whether $\gamma(G)\le k$ for a given graph $G$ and input $k$,  is a classical NP-complete decision problem in computational complexity theory, \cite{Garey1979}. In this section we obtain an upper bound on the domination number of $S(G,t)$ and we show that the bound is tight. 

 We define the parameter $\xi(G)$ as follows:
 $$\xi(G)=\max_{D\in {\cal D}(G)} \{|D'|:\; D'\subseteq D \mbox{ and $\langle D'\rangle$ has no isolated vertices}\}.$$
Notice that $0\le \xi(G) \le \gamma(G)$. In particular,
$\xi(G)=0$ if and only if any $\gamma (G)$-set is independent, while
$\xi(G) = \gamma(G)$
if and only if there exists a $\gamma (G)$-set whose induced subgraph has no isolated vertices.

\begin{theorem}\label{UpperBoundDomination}
For any graph   $G$   of order $n$ and any integer   $t\ge 2$,
$$\gamma(S(G,t)) \le n^{t - 2}(n\gamma(G)-\xi(G)).$$
\end{theorem}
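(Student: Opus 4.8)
The plan is to construct an explicit dominating set of $S(G,t)$ of the required size using a recursive/hierarchical argument, following the same "copy $n$ times and glue" structure used throughout the paper. The key observation is that in $S(G,t)$, the $n^{t-1}$ copies of $G$ sitting at the bottom level each need to be dominated, but the connecting edges between copies let us "save" some vertices, and the parameter $\xi(G)$ measures exactly how much can be saved locally: if a $\gamma(G)$-set $D$ contains a subset $D'$ with no isolated vertices in $\langle D'\rangle$, then the vertices of $D'$ already dominate one another, so their neighbours-within-$D'$ can be used as the "connection points" that reach into adjacent copies without costing anything extra.

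First I would fix a $\gamma(G)$-set $D$ and a subset $D'\subseteq D$ with $\langle D'\rangle$ having no isolated vertices and $|D'|=\xi(G)$; write $D=D'\cup D''$ where $D''=D\setminus D'$ has $|D''|=\gamma(G)-\xi(G)$. The idea for $t=2$: in the copy $\langle V_x\rangle\cong G$ corresponding to $x\in V$, place the translated set $\{xv:\ v\in D\}$ if $x$ is an "interior" choice, but for copies we want to economize on: note that the extreme vertex of $\langle V_x\rangle$ is $xx\cdots x$ and the cross-copy neighbours of $\langle V_x\rangle$ are the vertices $xyy\cdots y$ for $y\in N_G(x)$, all of which are neighbours of the extreme vertex $xx$ (for $t=2$, the vertex $xy$). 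I would choose one designated copy — rather, argue as follows: for $t=2$, take $S=\bigcup_{x\in V}\{xv:\ v\in D\}$; this has size $n\gamma(G)$ and dominates $S(G,2)$. To shave off $\xi(G)$, observe that we may instead, in $n$ of the copies, drop the vertices corresponding to $D'$ provided those dropped vertices are dominated from outside; this is where the cross edges and the structure of $D'$ come in. The cleanest route is probably to build $S(G,t)$ from $S(G,t-1)$: if $S_{t-1}$ dominates $S(G,t-1)$ with $|S_{t-1}|=n^{t-3}(n\gamma(G)-\xi(G))$ — wait, rather set up the recursion as $\gamma(S(G,t))\le n\,\gamma(S(G,t-1)) - (\text{saving})$ and unwind.

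Concretely, the step I would carry out is: show $\gamma(S(G,t))\le (n-1)\gamma(S(G,t-1)) + \gamma'(S(G,t-1))$ where in one distinguished copy we use a near-optimal dominating set that leaves exactly the cross-connection vertices to be dominated from neighbouring copies — but I think the honest approach matching the bound $n^{t-2}(n\gamma(G)-\xi(G))$ is to build the set directly at the bottom level. Take all $n^{t-2}$ words $w\in V^{t-2}$. For each such $w$, consider the induced copy of $S(G,2)$ on words beginning with $w$; in each such copy of $S(G,2)$ place a dominating set of size $n\gamma(G)-\xi(G)$, arranged so that it also handles the extreme vertex and the cross-copy edges that leave this $S(G,2)$-block. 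The $S(G,2)$-block is dominated: in its $n$ sub-copies of $G$ use the translate of $D$, except in the sub-copy containing the block's extreme vertex, where we use the translate of $D$ but delete the translate of $D'$ — the deleted vertices form a set with no isolated vertices, hence each is dominated by another deleted vertex's... no: each deleted vertex $wxv$ ($v\in D'$) has a neighbour $wxv'$ with $v'\in D'$ also deleted, which does not help. So instead delete $D'$ from a sub-copy and note each deleted vertex is dominated because $D'$ has no isolated vertex means $v$ has a neighbour $v'\in D'$; we must therefore NOT delete all of $D'$ but rather recognize that within one copy, keeping $D\setminus D'$ still dominates everything that $D'$ used to dominate \emph{except} vertices whose only $D$-neighbour lay in $D'$ — and those are dominated because $\langle D'\rangle$ has no isolated vertices so every vertex of $D'$ is adjacent to a retained... this needs care, and indeed the main obstacle is verifying that dropping (a translate of) $D'$ in one sub-copy per $S(G,2)$-block still yields a dominating set, which forces us to dominate the dropped vertices and the vertices they used to dominate via the retained part $D''$ together with the single cross-edge reaching the block's extreme vertex. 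I expect the argument to go through by choosing the dropped sub-copy to be the one whose extreme vertex is the extreme vertex of the whole $S(G,2)$-block, using Remark 1 to control how outside vertices attach, and using that $\langle D'\rangle$ has no isolated vertices to re-dominate internally; the bookkeeping of which vertices need which dominator is the delicate part. Finally, summing $n^{t-2}$ blocks each of cost $n\gamma(G)-\xi(G)$ gives the bound, and I would remark that tightness is established by the examples/results that follow.
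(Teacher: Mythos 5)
Your write-up never settles on a working construction, and the one you finally commit to is not just unverified but actually wrong. You propose, inside each block $\{wuv:\ u,v\in V\}$ ($w\in V^{t-2}$) viewed as a copy of $S(G,2)$, to use a translate of $D$ in every sub-copy of $G$ except one designated sub-copy, from which you delete the whole translate of $D'$. You yourself notice the obstruction mid-argument: a deleted vertex $wav$ with $v\in D'$ is only guaranteed a neighbour in $D'$, and all of those neighbours are deleted too; its retained dominators inside the sub-copy lie in the translate of $D-D'$, and $N_G(v)\cap(D-D')$ may be empty. This is fatal, not merely delicate bookkeeping. For instance, take $G=P_4$ with vertices $1\!-\!2\!-\!3\!-\!4$, so $D=D'=\{2,3\}$ and $D-D'=\emptyset$: whichever sub-copy $V_{wa}$ you strip, it ends up containing no vertices of your set, and a vertex such as $wa1$ (when $\{a,1\}\notin E$) has its unique neighbour $wa2$ inside that same stripped sub-copy, hence is undominated; no choice of the ``designated'' sub-copy repairs this, and the cross-edges through the block's extreme vertex cannot reach these vertices. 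So the claimed set of size $n\gamma(G)-\xi(G)$ per block is not a dominating set, and the bound does not follow.

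The paper's construction distributes the saving differently, and that is exactly the missing idea: starting from $D_{t-1}=\{wx:\ w\in V^{t-1},\ x\in D\}$, one removes $D_{t-2}=\{w'uu:\ w'\in V^{t-2},\ u\in D'\}$, i.e.\ from each bottom copy $\langle V_{w'u}\rangle$ with $u\in D'$ one removes only its extreme vertex $w'uu$ --- one vertex from each of the $\xi(G)$ sub-copies indexed by $D'$ per block, rather than $\xi(G)$ vertices from a single sub-copy. Then the hypothesis that $\langle D'\rangle$ has no isolated vertices is used exactly where it helps: the removed vertex $w'uu$ keeps a dominator $w'uv$ with $v\in D'\cap N_G(u)$ inside its own copy (this vertex is retained because its last two letters differ); the in-copy neighbours $w'uz$ are dominated by $w'zu$; and the out-of-copy neighbours, which all have the form $w''zz$ with $z\in N_G(u)$, are either themselves in the set (when $z\in D-D'$) or dominated by $w''zu$. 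Since $|D_{t-1}-D_{t-2}|=n^{t-1}\gamma(G)-n^{t-2}\xi(G)$, the bound follows. You would need to replace your per-block deletion by this extreme-vertex deletion (and carry out the neighbourhood check above) for the proof to stand.
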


\begin{proof}
Let  $D$ be a $\gamma(G)$-set and let  $$D_{t-1}=\{wx:\; w\in V^{t - 1}  \mbox{ and } x\in D\}.$$
If $u\in V$ is adjacent to $v\in D$, then for any  $w \in V^{t - 1}$,  we have that  $wu\in V^{t}$ is adjacent to  $wv\in D_{t-1}$ and, as a consequence, $D_{t-1}$ is a dominating set in $S(G,t).$

Now, assume  that there exists $D'\subseteq D$ such that the subgraph induced by $D'$ has no isolated vertices and $|D'|=\xi(G)$. Let define the set $$D_{t-2}=\{w'uu:\; w'\in V^{t-2} \mbox{ and } u\in D'\}.$$   We shall show that $D^*=D_{t-1}-D_{t-2}$ is a dominating set in $S(G,t) $.
To this end,  taking  $w'\in V^{t-2}$ and $u\in D'$,  we only need to show that each $x\in N_{S(G,t)}(w'uu)\cup\{w'uu\}$ is dominated by some vertex belonging to  $D^*$. 
Since $w'uu$ is dominated by $w'uv\in D^*$, for some $v\in D'\cap N_G(u)$, from now on we assume that $x\in N_{S(G,t)}(w'uu)$. Now, if $x=w'uz$, for some $z\in N_G(u)$, then $x$ is dominated by $w'zu\in D^*$, so we assume that $x=w''zz$, where $w''\in V^{t-2}$ and $z\in N_G(u)$. Thus, for $z\not \in D$ or $z\in D\cap D'$ we have that $x=w''zz$ is dominated by $w''zu\in D^*$. Finally, if 
 $z\in D-D'$, then $x=w''zz\in D^*$. 

Hence, $D_{t-1}-D_{t-2}$  is a dominating set in $S(G,t) $  and, as a consequence,
 $$\gamma(S(G,t)) \le|D_{t-1}-D_{t-2}| =n^{t-1}|D|-n^{t-2}|D'|.$$
 Therefore, the result follows.
\end{proof}

From now on $\Omega(G)$ denotes the set of vertices of degree one in $G$.

\begin{lemma}\label{Lemma2Domination}
Let $G$ be a graph such that $\gamma(G)=\beta(G)$. If there exists a unique $\gamma(G)$-set $D$, then   $|\Omega(G)\cap N_G(x)|\ge 2$, for every $x\in D$.
\end{lemma}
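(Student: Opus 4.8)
The plan is to leverage the hypotheses to show that $D$ is \emph{simultaneously} the unique minimum vertex cover of $G$, and then to run a replacement argument on each vertex of $D$. The setting should be understood for graphs with no isolated vertex: an isolated vertex would belong to $D$ and have no neighbour whatsoever, so the conclusion would fail (for instance, the disjoint union of a ``paw'' and $K_1$ satisfies $\gamma=\beta=2$ and has a unique $\gamma$-set containing that isolated vertex); since this holds in the intended application, I would record it at the outset. Under that assumption every vertex cover of $G$ is a dominating set, hence $\gamma(G)\le\beta(G)$; combining $\gamma(G)=\beta(G)$ with uniqueness of the $\gamma(G)$-set, a minimum vertex cover is a dominating set of cardinality $\gamma(G)$ and therefore must equal $D$. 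So $D$ is a vertex cover, and consequently $V\setminus D$ is independent --- equivalently, every vertex outside $D$ has all of its neighbours inside $D$.

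Next I would fix $x\in D$ and consider the set $P_x=\{z\in V\setminus D \,:\, N_G(z)\cap D=\{x\}\}$ of external private neighbours of $x$ with respect to $D$. Since $D$ is a minimum (hence minimal) dominating set, $x$ has a private neighbour, so either $P_x\neq\emptyset$ or $N_G(x)\cap D=\emptyset$. The core step is to prove $|P_x|\ge 2$, which I would do by contradiction via a swap: if $|P_x|\le 1$ I would exhibit a dominating set of cardinality $\gamma(G)$ distinct from $D$. If $P_x=\{z\}$, then $z\in V\setminus D$ gives $N_G(z)\subseteq D$, and $N_G(z)\cap D=\{x\}$ forces $N_G(z)=\{x\}$; hence $N_G[z]=\{z,x\}$ contains the whole private neighbourhood of $x$ (which lies in $\{x,z\}$), so $(D\setminus\{x\})\cup\{z\}$ is again dominating. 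If $P_x=\emptyset$, then $N_G(x)\cap D=\emptyset$, and for any neighbour $y$ of $x$ (it exists, and $y\notin D$) the set $(D\setminus\{x\})\cup\{y\}$ is dominating because $x$ is the only private neighbour of $x$ and $x\in N_G[y]$. Either way we contradict the uniqueness of $D$, so $|P_x|\ge 2$.

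Finally, the conclusion drops out: each $z\in P_x$ satisfies $N_G(z)\subseteq D$ and $N_G(z)\cap D=\{x\}$, hence $N_G(z)=\{x\}$, i.e.\ $z$ is a vertex of degree one adjacent to $x$; thus $P_x\subseteq \Omega(G)\cap N_G(x)$ and $|\Omega(G)\cap N_G(x)|\ge|P_x|\ge 2$. I expect the main obstacle to be the bookkeeping in the swap step --- verifying that every vertex previously dominated \emph{only} by $x$ is still dominated after the exchange --- and this is precisely where the two preparatory facts are used together: ``$D$ is a vertex cover'' forces each external private neighbour of $x$ to be a leaf at $x$, while ``$|P_x|\le 1$'' keeps the private neighbourhood of $x$ small enough to be absorbed into a single closed neighbourhood. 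The subsidiary obstacle is simply recognising that the no-isolated-vertex hypothesis is genuinely needed.
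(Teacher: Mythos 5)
Your proof is correct and takes essentially the same route as the paper's: use $\gamma(G)=\beta(G)$ together with uniqueness to identify $D$ with the minimum vertex cover (so $V\setminus D$ is independent and external private neighbours of a vertex $x\in D$ are exactly leaves at $x$), and then swap $x$ for a suitable neighbour to produce a second minimum dominating set, contradicting uniqueness. If anything, your write-up is a bit more careful than the paper's terse argument, since you make explicit the tacit no-isolated-vertex assumption and use the private-neighbour fact to handle the case where $x$ has no neighbour outside $D$, a case the paper passes over silently.
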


\begin{proof}
Let $D$ be a $\beta(G)$-set. Since $\gamma(G)=\beta(G)$, $D$ is a $\gamma(G)$-set and $V-D$ is an $\alpha(G)$-set. Assume that $D$ is the only $\gamma(G)$-set and suppose that  $|\Omega(G)\cap N_G(v)|\le 1$, for some $v\in D$.
 Let $x\in N_G(v)-D$. If $x\not \in \Omega(G)$, then there exists $v'\in D$ such that  $x\in N_G(v')$. Hence, if $\Omega(G)\cap N_G(v)=\emptyset$, then $(D-\{v\})\cup \{x\}$ is a dominating set, which is a contradiction. Also, if  $\Omega(G)\cap N_G(v)=\{y\}$, then $(D-\{v\})\cup \{y\}$ is a dominating set, which is a contradiction again. Therefore, the result follows.
\end{proof}

\begin{theorem}\label{EqualityDomination}
Let $G$ be a graph of order $n$ such that there exists a unique
$\gamma(G)$-set and $\gamma(G)=\beta(G)$. Then for any integer $t\ge
2$ $$\gamma(S(G,t)) = n^{t - 2}(n\gamma(G)-\xi(G)).$$
\end{theorem}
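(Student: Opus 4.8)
The upper bound $\gamma(S(G,t))\le n^{t-2}(n\gamma(G)-\xi(G))$ is Theorem~\ref{UpperBoundDomination}, so the plan is to prove the reverse inequality. Let $D$ be the (unique) $\gamma(G)$-set and let $S$ be an arbitrary minimum dominating set of $S(G,t)$. I will locate inside $S$ two disjoint blocks of vertices whose sizes add up to exactly $n^{t-2}(n\gamma(G)-\xi(G))$. Throughout, write $\mathrm{last}(w)$ for the last letter of a word $w$, and recall that in $S(G,t)$ only the extreme vertex of a bottom copy $\langle V_w\rangle$ and its neighbours can be joined to vertices outside that copy.

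\emph{First block: the hubs.} By Lemma~\ref{Lemma2Domination} every $x\in D$ has at least two leaf-neighbours $\ell_1,\ell_2$ in $G$, so $N_G(\ell_i)=\{x\}$, and $x\notin\{\ell_1,\ell_2\}$ since $G$ has no loops. Using the recursive construction of $S(G,t)$, for every $w\in V^{t-1}$ with $\mathrm{last}(w)\ne x$ the two vertices $w\ell_1,w\ell_2$ are leaves of $S(G,t)$ whose only neighbour is $wx$. Hence if $wx\notin S$, then $\{w\ell_1,w\ell_2\}\subseteq S$, and replacing these two vertices by $wx$ yields a dominating set of strictly smaller size, contradicting the minimality of $S$. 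Therefore $S\supseteq M:=\{wx:\ w\in V^{t-1},\ x\in D,\ x\ne\mathrm{last}(w)\}$, and a short count (there are $n^{t-2}|D|$ words with $\mathrm{last}(w)\in D$, contributing $|D|-1$ each, and the rest contribute $|D|$) gives $|M|=(n-1)n^{t-2}\gamma(G)$.

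\emph{Second block: the unreachable extreme vertices.} Let $I$ be the set of vertices of $D$ that are isolated in $\langle D\rangle$. Since $D$ is the only $\gamma(G)$-set, $\xi(G)$ is realized by a subset of $D$; as $D\setminus I$ induces a subgraph with no isolated vertex while every subset of $D$ meeting $I$ does not, we get $\xi(G)=|D|-|I|=\gamma(G)-|I|$. Next I would verify, using the remark recalled above together with an explicit description of the connecting edges at extreme vertices, three facts: (i) $M$ dominates every vertex of $S(G,t)$ except the extreme vertices $e_w$ of $\langle V_w\rangle$ with $\mathrm{last}(w)\in I$; (ii) if $\mathrm{last}(w)\in I$, then $e_w$ is dominated by no vertex of $M$; and (iii) for words $w$ with $\mathrm{last}(w)\in I$ the closed neighbourhoods $N[e_w]$ are pairwise disjoint (and disjoint from $M$). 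The crux of (ii)--(iii) is that such an $e_w$ has at most one neighbour outside $\langle V_w\rangle$, and that neighbour is again an extreme vertex lying in a copy whose defining word ends in a letter \emph{not} in $D$ (because $\mathrm{last}(w)\in I$ has no neighbour in $D$, its remaining neighbours lie outside $D$). From (ii) and (iii), $S$ must spend one private vertex outside $M$ on each of the $n^{t-2}|I|$ vertices $e_w$ with $\mathrm{last}(w)\in I$, so $|S\setminus M|\ge n^{t-2}|I|$. (In fact (i)--(iii) show $M$ plus one such private vertex per $e_w$ is itself a dominating set, re-deriving Theorem~\ref{UpperBoundDomination} in this case.)

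Putting the blocks together, $\gamma(S(G,t))=|S|\ge|M|+n^{t-2}|I|=(n-1)n^{t-2}\gamma(G)+n^{t-2}|I|=n^{t-2}(n\gamma(G)-\xi(G))$, which together with Theorem~\ref{UpperBoundDomination} gives the asserted equality. I expect the main obstacle to be the analysis underlying (ii) and (iii): identifying exactly which extreme vertices escape domination by $M$ (they are precisely those indexed by the isolated vertices of $\langle D\rangle$, which is why the correction term is $\gamma(G)-\xi(G)$) and proving the pairwise disjointness of their closed neighbourhoods; this in turn rests entirely on the structural fact that an extreme vertex of a bottom copy has a single neighbour outside its copy and that this neighbour is itself an extreme vertex whose copy is indexed by a word with last letter outside $D$. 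Everything else is bookkeeping with $|M|$ and $|I|$.
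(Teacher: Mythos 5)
Your overall strategy (force a core of $n^{t-2}(n-1)\gamma(G)$ ``hub'' vertices into every minimum dominating set, then charge one extra private dominator to each extreme vertex indexed by an isolated vertex of $\langle D\rangle$) is different from the paper's argument, which instead compares the explicit dominating set $D^*=D_{t-1}-D_{t-2}$ with an arbitrary minimum dominating set $D^{\pi}$ and bounds $|D^*-D^{\pi}|\le|D^{\pi}-D^*|$ by showing every vertex of $D^*-D^{\pi}$ must be an extreme vertex of its copy and that any vertex can dominate at most one extreme vertex. However, your first block contains a genuine error. The claim that for every $w\in V^{t-1}$ with $\mathrm{last}(w)\ne x$ both $w\ell_1$ and $w\ell_2$ are leaves of $S(G,t)$ whose only neighbour is $wx$ is false: if $\mathrm{last}(w)=\ell_1$, then $w\ell_1$ is the extreme vertex of $\langle V_w\rangle$ and may have a second neighbour in another copy (this happens for $t\ge 3$, e.g.\ whenever $w$ ends in $x\ell_1$). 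In that situation only one leaf is forced into $S$, the swap of that single leaf for $wx$ preserves cardinality rather than decreasing it, and minimality gives no contradiction. Consequently the assertion ``$S\supseteq M$ for every minimum dominating set $S$'' is simply not true. Concretely, take $G=P_3$ with leaves $a,b$ and centre $c$ (so $D=\{c\}$ is the unique $\gamma(G)$-set, $\gamma=\beta=1$, $\xi=0$) and $t=3$: the set $\{aac,abc,acc,bac,bbc,bcc,cab,cbc,ccc\}$ is a dominating set of $S(P_3,3)$ of the minimum size $9$, yet it omits $cac\in M$ (the vertex $caa$ is dominated ``from outside'' by $acc$, and $cab$ takes over the role of $cac$ inside the copy $\langle V_{ca}\rangle$).

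Since the second block's inequality $|S|\ge |M|+n^{t-2}|I|$ is anchored on $M\subseteq S$, the lower-bound derivation collapses as written; the remaining items (i)--(iii), which you explicitly defer, are plausible but also nontrivial (e.g.\ (i) needs the level-$(t-1)$ connecting edges to dominate vertices $wv$ whose only $D$-neighbour in $G$ equals $\mathrm{last}(w)$). A repair along your lines would have to weaken the first block to ``\emph{some} minimum dominating set contains $M$'' via a careful exchange argument (each $wx\in M\setminus S$ still has at least one private leaf, which can be swapped for $wx$ without interference), and then run the private-neighbour count for that particular set; alternatively, one can follow the paper's route, which avoids forcing any fixed core by matching the vertices of $D^*\setminus D^{\pi}$ (necessarily extreme vertices, by Lemma~\ref{Lemma2Domination}) injectively into $D^{\pi}\setminus D^*$.
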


\begin{proof}
Let $D\subset V$ be the only $\gamma(G)$-set. As we have shown
in the proof of Theorem \ref{UpperBoundDomination},  $D^* =
D_{t-1}-D_{t-2}$ is a dominating set of $S(G,t)$ and $|D^*|= n^{t - 2}(n\gamma(G)-\xi(G)).$
 Let $D^{\pi}$ be a
dominating set of $S(G,t)$ of minimum cardinality.  If $D^* - D^{\pi} =
\emptyset$, then $|D^*| \le |D^{\pi}|$ and, as a consequence, $\gamma(S(G,t)) = |D^*|$.
 Let $w\in V^{t-1}$ and assume  that $w x \in D^* - D^{\pi}$.
Since  $x\in D$, by Lemma \ref{Lemma2Domination} we have
$|\Omega(G)\cap N_G(x)|\ge 2$. Thus, if $wx$ is not the extreme of
$\langle V_w \rangle$, then there are at least two vertices of degree
one adjacent to $wx$ in $S(G,t)$, which implies that $wx\in D^{\pi}$ and it is a contradiction.
Thus, $wx$ is the extreme vertex of $\langle
V_w\rangle$ \textit{i.e.},  $wx =w'xx$, for some $w'\in V^{t-2}$. Notice that:
\begin{enumerate}[\bf -]
\item { $w'xx$ is dominated by a vertex $w''y \in D^{\pi} $. Now, if
$w''y \in D^*$, then $x$ and $y$ are adjacent in $G$, $x \in D'$ and
$w'xx \not\in D^*$,  which is a contradiction. Hence, $w''y \in D^{\pi} - D^*$.}
\item {The only vertex in $D^* - D^{\pi}$ dominated by $w''y$ is
$w'xx$, as a vertex in $S(G, t)$ can only be adjacent to one extreme
vertex.}
\end{enumerate}
Let $S\subseteq D^{\pi} - D^*$ such that each vertex of
$D^* - D^{\pi}$ is dominated by a vertex in $S$. Then we can
define a mapping $f: S\longrightarrow D^* - D^{\pi}$, where
$f(u) = v$ means that $v$ is dominated by $u$. As $f$ is an onto
mapping, $|D^* - D^{\pi}| \le |S| \le
|D^{\pi} - D^*|$. Therefore, $|D^*| \le |D^{\pi}|$, and  we  can
conclude that $\gamma(S(G,t)) = |D^{\pi}|  =|D^*| = n^{t -
2}(n\gamma(G)-\xi(G)).$
\end{proof}

\begin{lemma}\label{Lemma1Domination}
Let $G$ be a graph of order $n$ and let $t\ge 3$ be an integer. If $\gamma(S(G,t))=n^{t-1}\gamma(G)$, then there exists a unique $\gamma(G)$-set.
\end{lemma}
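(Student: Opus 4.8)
The plan is to prove the contrapositive: assuming $G$ has at least two distinct $\gamma(G)$-sets, I will construct a dominating set of $S(G,t)$ of size strictly less than $n^{t-1}\gamma(G)$, which forces $\gamma(S(G,t)) < n^{t-1}\gamma(G)$. The starting point is the ``canonical'' dominating set $D_{t-1} = \{wx : w\in V^{t-1}, x\in D\}$ used in the proof of Theorem \ref{UpperBoundDomination}, which has exactly $n^{t-1}\gamma(G)$ vertices; the goal is to modify it locally in at least one copy of $S(G,2)$ so as to save at least one vertex.

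First I would fix two distinct $\gamma(G)$-sets $D$ and $\widetilde D$, and pick $x \in D \setminus \widetilde D$. The idea is that, inside a single copy $\langle V_w\rangle \cong G$ with $w\in V^{t-2}$, the extreme vertex of that copy (say $w x x$, a word of length $t$ whose first $t-2$ letters form $w$ and whose last two letters are $xx$) and the vertices dominating it from outside the copy can be ``rebalanced.'' More precisely, within the copy $\langle V_{wx}\rangle$ -- itself isomorphic to $G$ and embedded inside the larger copy indexed by $w$ -- one can try to use $\widetilde D$ (shifted appropriately) on the relevant sub-copy while keeping $D$ on all the other $n-1$ sub-copies, and verify that the only domination obligation that crosses between these sub-copies concerns the single edge joining their two connecting vertices. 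Since $x \notin \widetilde D$, replacing the block of $|D|$ vertices sitting on one sub-copy by a cleverly chosen block of the same size that avoids putting a dominator at the extreme vertex, and then deleting one now-redundant vertex elsewhere, should produce a dominating set that is one vertex smaller. The key structural facts I would lean on are Remark 1 (a vertex of $\langle V_w\rangle$ adjacent to something outside is either the extreme vertex or adjacent to it) and the recursive decomposition of $S(G,t)$ into $n$ copies of $S(G,t-1)$ joined by at most one edge per edge of $G$.

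The main obstacle will be making the local modification genuinely save a vertex while not breaking domination at the ``seams'' between copies: when I swap the dominating set on one small copy, I must check that (a) the extreme vertex of that copy is still dominated -- possibly now from outside, via the connecting edge, which is exactly where the hypothesis $t\ge 3$ (so that there is an ambient word $w$ of length $t-2 \ge 1$ and the copy is not the whole graph) buys the needed slack -- and (b) no vertex in a neighbouring copy that was relying on a now-removed dominator is left uncovered. I expect the cleanest route is: take $D$ a $\gamma(G)$-set for which some vertex $x\in D$ is \emph{not} in every $\gamma(G)$-set, use $D_{t-1}$ everywhere except on the sub-copies sitting inside one fixed copy $\langle V_{w}\rangle$ with $wx$ the connecting vertex, and there install $\widetilde D$ on the $x$-indexed sub-copy so that $wxx\ldots$ need not be dominated internally; a counting comparison of $|D|$ versus $|\widetilde D| = |D|$ together with one saved vertex at the seam then yields $\gamma(S(G,t)) \le n^{t-1}\gamma(G) - 1$. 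Verifying the seam conditions carefully, using Remark 1 to limit the cross-copy interactions to the extreme vertices and their neighbours, is the technical heart of the argument.
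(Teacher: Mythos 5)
Your high-level plan is the same as the paper's: start from the canonical dominating set $D_{t-1}$, use a second $\gamma(G)$-set to modify it near one seam, and save one vertex, so that $\gamma(S(G,t))\le n^{t-1}\gamma(G)-1$. But the proposal stops exactly at the point where the actual proof lives: you never specify which vertex is deleted nor why it becomes redundant, and you explicitly defer ``verifying the seam conditions,'' which is the whole content of the lemma. Worse, the localization you propose does not work in general. If you install $\widetilde D$ on the $x$-indexed sub-copy inside an arbitrary copy $\langle V_w\rangle$ with $w\in V^{t-2}$ and then delete one vertex of the canonical set nearby (the natural candidate being $wbx$ or, in the paper's notation with $a\in A-B$ and $b\in B\cap N_G(a)$, the vertex $wba$), you must re-dominate the in-copy neighbours $wbz$, $z\in N_G(a)$, of the deleted vertex. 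If $z$ is a private neighbour of $a$ with respect to $A$ (i.e.\ $N_G[z]\cap A=\{a\}$), the only remaining candidate dominator of $wbz$ is its unique cross-copy neighbour $wzb$, which lies in the canonical set only if $b\in A$ --- and $b\notin A$. So the ``local in one copy of $S(G,2)$'' modification can leave vertices undominated, and no counting of $|D|$ versus $|\widetilde D|$ fixes this.

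The paper's proof resolves precisely this obstruction by choosing where to modify: it swaps the $B$-block onto the copy $V_{ab^{t-2}}$ (index word $a$ followed by $b$'s) and deletes the extreme vertex $ba^{t-1}$ of the copy $V_{ba^{t-2}}$, using the top-level seam edge $ab^{t-1}\sim ba^{t-1}$ (note $ab^{t-1}$ is in the new block because $b\in B$ --- the saving comes from the swap \emph{putting} a dominator at the extreme vertex of the swapped copy, the opposite of what you describe). The problematic neighbours of the deleted vertex are then $ba^{t-2}z$ with $z\in N_G(a)$, and each of these is adjacent to $ba^{t-3}za$, a vertex ending in $a\in A$ and hence already in $D_{t-1}$; this alternative dominator exists exactly because $t\ge 3$. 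Your stated reason for needing $t\ge 3$ (``the copy is not the whole graph'') is not the real one, and this is not a cosmetic issue: the lemma is false for $t=2$ (take $G=K_2$; then $S(K_2,2)\cong P_4$ has $\gamma=2=n^{t-1}\gamma(G)$ although $K_2$ has two $\gamma$-sets), so any correct argument must use $t\ge 3$ at this specific point. Finally, the paper also observes (via Theorem \ref{UpperBoundDomination}) that the hypothesis forces $\xi(G)=0$, which is what guarantees $b\notin A$ in its contradiction setup; in your contrapositive framing you would either need this observation or handle the case $\xi(G)>0$ separately, which the proposal does not address.
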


\begin{proof}
Assume that $\gamma(S(G,t))=n^{t-1}\gamma(G).$ Notice that, by Theorem \ref{UpperBoundDomination}, we have that $\xi(G)=0$. Suppose, for contradiction proposes, that $A$ and $B$ are two different $\gamma(G)$-sets. In such a case, there exist $a\in A-B$ and $b\in B$ such that $b$ dominates $a$. Now, if $b\in A$, then $\xi(G)>0$, which is a contradiction. So, $b\not\in A$.   Following a procedure analogous to that used in the proof of Theorem \ref{UpperBoundDomination} we see that $A_{t-1}=\{wv:\; w\in V^{t-1}\; {\rm and}\; v\in A\}$ is a dominating set of $S(G,t)$. Hence,
$$A'=\left(A_{t-1} -( \{abb\ldots bx:\; x\in A\} \cup \{baa\ldots a\} \right) )\cup \{abb\ldots bx:\; x\in B\}$$
is a dominating set of $S(G,t)$
as any vertex in $\{abb\ldots bx:\; x\in V\}$  is dominated by some vertex in $\{abb\ldots bx:\; x\in B\}\subset A'$, $baa\ldots a$ is dominated by  $abb\ldots b\in A'$ and, for any $z\in N_G(a)$,  $ baa\ldots az$  is dominated by  $baa\ldots aza\in A'$. Thus, $\gamma(S(G,t))\le |A'|=n^{t-1}\gamma(G)-1$, which is a contradiction. Therefore, the result follows.
\end{proof}

\begin{theorem}\label{EquivDomination}
Let $G$ be a graph of order $n$ such that $\gamma(G)=\beta(G)$ and let $t\ge 3$ be an  integer. The following assertions are equivalent.
\begin{enumerate}[{\rm (a)}]
\item $\gamma(S(G,t))=n^{t-1}\gamma(G).$
\item $\xi(G)=0$ and there exists a unique $\gamma(G)$-set.
\end{enumerate}
\end{theorem}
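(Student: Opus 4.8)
The plan is to derive both implications directly from the three results already established in this section — the upper bound of Theorem~\ref{UpperBoundDomination}, the equality of Theorem~\ref{EqualityDomination}, and the uniqueness criterion of Lemma~\ref{Lemma1Domination} — so that essentially no new combinatorial construction is required; the present proof is little more than a careful assembly of those facts.

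\textbf{Proof of (a) $\Rightarrow$ (b).} I would assume $\gamma(S(G,t))=n^{t-1}\gamma(G)$ and substitute this value into the bound $\gamma(S(G,t))\le n^{t-2}(n\gamma(G)-\xi(G))$ of Theorem~\ref{UpperBoundDomination}. Rewriting the right-hand side as $n^{t-1}\gamma(G)-n^{t-2}\xi(G)$, the inequality becomes $n^{t-1}\gamma(G)\le n^{t-1}\gamma(G)-n^{t-2}\xi(G)$, which forces $n^{t-2}\xi(G)\le 0$; since $n\ge 1$ and $\xi(G)\ge 0$, this gives $\xi(G)=0$. The uniqueness of the $\gamma(G)$-set is then exactly the conclusion of Lemma~\ref{Lemma1Domination}, whose hypothesis $t\ge 3$ is available in the present statement.

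\textbf{Proof of (b) $\Rightarrow$ (a).} Here I would invoke Theorem~\ref{EqualityDomination}. Its hypotheses are all in force: $\gamma(G)=\beta(G)$ is a standing assumption of the theorem being proved, and the existence of a unique $\gamma(G)$-set is granted by (b). Theorem~\ref{EqualityDomination} then yields $\gamma(S(G,t))=n^{t-2}(n\gamma(G)-\xi(G))$, and substituting $\xi(G)=0$ from (b) collapses this to $n^{t-2}\cdot n\gamma(G)=n^{t-1}\gamma(G)$, which is assertion (a).

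\textbf{Where the care is needed.} The only delicate point is bookkeeping of hypotheses rather than any genuine obstacle. Lemma~\ref{Lemma1Domination} is the step that truly requires $t\ge 3$ — for $t=2$ the argument producing a strictly smaller dominating set out of two distinct $\gamma(G)$-sets no longer applies — which is why the equivalence is stated for $t\ge 3$. Likewise, the assumption $\gamma(G)=\beta(G)$ enters only through Theorem~\ref{EqualityDomination} (and, inside it, Lemma~\ref{Lemma2Domination}), and is not otherwise used. Since all the substantive work has already been done in the cited results, I expect the write-up to be essentially two short paragraphs with no further complications.
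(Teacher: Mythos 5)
Your proposal is correct and follows exactly the paper's argument: (a) $\Rightarrow$ (b) by combining the upper bound of Theorem~\ref{UpperBoundDomination} (forcing $\xi(G)=0$) with Lemma~\ref{Lemma1Domination} for uniqueness, and (b) $\Rightarrow$ (a) by applying Theorem~\ref{EqualityDomination} with $\xi(G)=0$. The only difference is that you spell out the arithmetic of the substitution, which the paper leaves implicit.
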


\begin{proof}
 Assume that $\gamma(S(G,t))=n^{t-1}\gamma(G).$ By Theorem \ref{UpperBoundDomination}, we have that $\xi(G)=0$ and by Lemma \ref{Lemma1Domination} we have that there exists a unique $\gamma(G)$-set.

Now, if $\xi(G)=0$ and   there exists a unique $\gamma(G)$-set, then by Theorem \ref{EqualityDomination} we conclude that  $\gamma(S(G,t))= n^{t-1}\gamma(G).$
\end{proof}

It is ready to see that $\gamma(S(K_{1,r},2))=r+1$.  Hence, from Theorem \ref{EquivDomination} we deduce the following result.
\begin{corollary}
For any positive integers $r$ and $t$,
$$\gamma(S(K_{1,r},t))=(r+1)^{t-1}.$$
\end{corollary}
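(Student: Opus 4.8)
The plan is to apply Theorem~\ref{EquivDomination} to the star $K_{1,r}$, so the work splits into two small verifications: first, that the hypotheses of that theorem hold for $G=K_{1,r}$, and second, the base case $t=2$ which the theorem does not cover. For the hypotheses, recall $K_{1,r}$ has order $n=r+1$, a single center $c$ and $r$ leaves. The set $\{c\}$ is clearly dominating, so $\gamma(K_{1,r})=1$, and it is also a vertex cover (every edge meets $c$), so $\beta(K_{1,r})=1=\gamma(K_{1,r})$. Moreover $\{c\}$ is the \emph{only} dominating set of size $1$, since no single leaf dominates the graph when $r\ge 2$ (and when $r=1$, $K_{1,1}=K_2$, where the unique $\gamma$-set issue must be checked separately: $\gamma(K_2)=1$ but there are two $\gamma$-sets, so for $r=1$ I would instead note $K_{1,1}$ is a tree/path and handle $S(K_2,t)=P_{2^t}$ directly, or simply observe $r\ge 1$ and treat $r=1$ via the path formula $\gamma(P_{2^t})=\lceil 2^t/3\rceil$—wait, this does not match $(r+1)^{t-1}=2^{t-1}$, so in fact $r=1$ must be excluded or the corollary implicitly assumes $r\ge 2$; I would flag this and proceed assuming $r\ge 2$, noting $\xi(K_{1,r})=0$ since the unique $\gamma$-set $\{c\}$ is a single vertex hence has no edge, forcing $\langle D'\rangle$ with $D'\subseteq\{c\}$ to have $D'=\emptyset$ for it to have no isolated vertices).

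With $\gamma(K_{1,r})=\beta(K_{1,r})$, a unique $\gamma$-set, and $\xi(K_{1,r})=0$ all in hand, Theorem~\ref{EquivDomination} gives $\gamma(S(K_{1,r},t))=n^{t-1}\gamma(K_{1,r})=(r+1)^{t-1}\cdot 1=(r+1)^{t-1}$ for every $t\ge 3$. It remains to treat $t=2$ and $t=1$. For $t=1$, $S(K_{1,r},1)=K_{1,r}$ and $(r+1)^{0}=1=\gamma(K_{1,r})$, trivially. For $t=2$, the corollary already asserts $\gamma(S(K_{1,r},2))=r+1$, which is exactly the formula $(r+1)^{t-1}$ at $t=2$; this is the "it is ready to see" claim stated just before the corollary. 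I would supply the short argument: $S(K_{1,r},2)$ consists of $r+1$ copies of $K_{1,r}$ (indexed by the first letter) plus the edges joining copy-$x$ to copy-$y$ for each base edge $\{x,y\}$, i.e.\ joining center-labelled and leaf-labelled boundary vertices. Taking one center in each of the $r+1$ copies gives a dominating set of size $r+1$; conversely each copy needs at least one vertex to dominate its own leaves (a leaf of copy-$\ell$ that is not a connecting vertex has all its neighbours inside copy-$\ell$), and there are $r+1$ copies, with the connecting edges between two copies unable to simultaneously cover the "private" structure of both—so $\gamma\ge r+1$. Hence equality at $t=2$.

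The main obstacle is not conceptual but bookkeeping at the boundary cases: the theorem we are invoking requires $t\ge 3$, so the cases $t=1,2$ genuinely must be done by hand, and—more subtly—the degenerate case $r=1$ (where $K_{1,1}=K_2$ fails the "unique $\gamma$-set" hypothesis) appears to be a genuine exception to the stated corollary rather than a case covered by the machinery, so I would either restrict to $r\ge 2$ or verify $r=1$ separately and note that $(r+1)^{t-1}=2^{t-1}\ne\lceil 2^t/3\rceil=\gamma(P_{2^t})$ for $t\ge 2$, meaning the corollary as literally stated needs $r\ge 2$. Assuming that restriction, the proof is a clean two-line application of Theorem~\ref{EquivDomination} together with the elementary facts $\gamma(K_{1,r})=\beta(K_{1,r})=1$, uniqueness of $\{c\}$, and $\xi(K_{1,r})=0$, plus the already-granted $t=2$ identity.
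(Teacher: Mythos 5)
Your proposal is correct and follows essentially the same route as the paper: verify $\gamma(K_{1,r})=\beta(K_{1,r})=1$, that $\{c\}$ is the unique $\gamma$-set, and $\xi(K_{1,r})=0$, invoke Theorem \ref{EquivDomination} for $t\ge 3$, and settle $t=2$ (and trivially $t=1$) by a direct argument, which is exactly the paper's ``it is ready to see that $\gamma(S(K_{1,r},2))=r+1$'' step. Your caveat about $r=1$ is not a defect of your proof but a genuine gap in the corollary as stated: $K_{1,1}=K_2$ has two $\gamma$-sets, so the hypotheses of Theorems \ref{EqualityDomination} and \ref{EquivDomination} fail, and indeed $S(K_2,t)\cong P_{2^t}$, whence $\gamma(S(K_{1,1},3))=\gamma(P_8)=3\ne 4=2^{3-1}$; the statement therefore requires $r\ge 2$. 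A small simplification worth noting: invoking Theorem \ref{EqualityDomination} directly (which holds for all $t\ge 2$) yields $\gamma(S(K_{1,r},t))=(r+1)^{t-2}\bigl((r+1)\cdot 1-0\bigr)=(r+1)^{t-1}$ and makes the separate $t=2$ computation unnecessary.
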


\end{document}